\newcommand{\F}{\mathbb {F}}
\def\qu#1 {\fbox {\footnote {\ }}\ \footnotetext { From Qu: {\color{red}#1}}}
\def\hqu#1 {}
\newtheorem{thm}{Theorem}[section]
\newtheorem{lem}[thm]{Lemma}
\newtheorem{rem}[thm]{Remark}
\newtheorem{pro}[thm]{Proposition}
\newtheorem{prob}[thm]{Problem}
\newtheorem{ex}[thm]{Example}
\begin{document}

\begin{frontmatter}



\title{Further Study of Planar Functions in Characteristic Two\tnoteref{label1}}
\tnotetext[label1]{This work is supported in part by the Nature Science Foundation of China (NSFC) under Grant 61722213, 11531002, in part by the National Key Research and Development Program of China under 2017YFB0802000, and in part by the Open Foundation of State Key Laboratory of Cryptology.}

\author[nudt]{Yubo Li}
\ead{leeub\_0425@hotmail.com}
\author[nudt]{Kangquan Li}
\ead{likangquan11@nudt.edu.cn}
\author[nudt,SKL]{Longjiang Qu\corref{cor1}}
\cortext[cor1]{Corresponding author.}
\ead{ljqu\_happy@hotmail.com}
\author[nudt]{Chao Li}
\ead{lichao\_nudt@sina.com}
\address[nudt]{College of Liberal Arts and Sciences, National University of Defense Technology, Changsha,	410073, China.}
\address[SKL]{State Key Laboratory of Cryptology, Beijing, 100878, China.}




\begin{abstract}
Planar functions are of great importance in the constructions of DES-like iterated ciphers, error-correcting codes, signal sets and the area of mathematics.
They are defined over finite fields of odd characteristic originally and generalized by Y. Zhou \cite{Zhou} in even characteristic. 
In 2016, L. Qu \cite{Q} proposed a new approach to constructing quadratic planar functions over $\F_{2^n}$.
Very recently, D. Bartoli and M. Timpanella \cite{Bartoli} characterized the condition on coefficients $a,b$ such that the function $f_{a,b}(x)=ax^{2^{2m}+1}+bx^{2^m+1} \in\F_{2^{3m}}[x]$ is a planar function over $\F_{2^{3m}}$ by the Hasse-Weil bound.

In this paper, using the Lang-Weil bound, a generalization of the Hasse-Weil bound, and the new approach introduced in \cite{Q}, we completely characterize the necessary and sufficient conditions on coefficients of four classes of planar functions over $\F_{q^k}$, where $q=2^m$ with $m$ sufficiently large (see Theorem \ref{main}).
The first and last classes of them are over $\F_{q^2}$ and $\F_{q^4}$ respectively, while the other two classes are over $\F_{q^3}$. 
One class over $\F_{q^3}$ is an extension of $f_{a,b}(x)$ investigated in \cite{Bartoli}, while our proofs seem to be much simpler.  
In addition, although the planar  binomial over $\F_{q^2}$ of our results is finally a known planar monomial, we also answer the necessity at the same time and solve partially an open problem for the binomial case proposed in  \cite{Q}. 
\end{abstract}

\begin{keyword} 
Planar Function, Permutation Polynomial, Hypersurface, Lang-Weil Bound

\MSC[2010] 94A60 11T06 11G20 12E05


\end{keyword}

\end{frontmatter}


\section{Introduction}
Let $p$ be an odd prime and $n$ be a positive integer. 
A function $f:\F_{p^n}\to\F_{p^n}$ is called \textit{planar} if the mapping $D_{f}$ 
\begin{equation*}
x\mapsto f(x+a)-f(x)
\end{equation*}
is a permutation over $\F_{p^n}$ for each $a\in\F_{p^n}^{*}$. Planar functions in odd characteristic were first introduced by P. Dembowski and T.G. Ostrom \cite{D-O} in order to construct finite projective planes in 1968. 
Apart from this, they were also used in the constructions of DES-like iterated ciphers, error-correcting codes and signal sets.
For example, in 1975, K.J. Ganley and E. Spence \cite{MGES} showed that planar functions give rise to certain relative difference sets. 
Later in CRYPTO'92, K. Nyberg and L.R. Knudsen \cite{Nyberg} studied planar functions for applications in cryptography. 
In the cryptography literature, planar functions are called \textit{perfect nonlinear functions} (PN), since they are optimally resistant to differential cryptanalysis.
In the last decades, planar functions have been found to have deep applications in different areas of mathematics and codes.
Particularly in 2005, C. Carlet et al. \cite{CDJ}  utilized planar functions to construct error-correcting codes, which were then employed to design secret sharing schemes.
Later in 2007, C. Ding and J. Yin \cite{Ding-J} used planar functions to investigate signal sets and constructed optimal codebooks meeting the Levenstein bound.

However, if $p=2$, there are no planar functions under the conventional definition over $\F_{2^n}$. Since if $x$ satisfies $f(x+a)-f(x)=d$, where $d\in\F_{2^n}$, then so does $x+a$.
In \cite{Zhou}, an extended definition of planar functions in even characteristic was proposed by Y. Zhou.
That is, a function $f:\F_{2^n}\to\F_{2^n}$ is called \textit{planar} if the mapping
\begin{equation}
\label{pseudo-planar}
x\mapsto f(x+a)+f(x)+ax
\end{equation}
is a permutation over $\F_{2^n}$ for each $a\in\F_{2^n}^{*}$.

Note that such functions in even characteristic are still called  `planar'  by Y. Zhou \cite{Zhou}, while they are called `pseudo-planar'  in \cite{Abdukhalikov, Q}  to avoid confusion with planar functions in odd characteristic.
In the rest of the paper, we mainly investigate the functions over finite fields with even characteristic and still call them planar functions without ambiguity.

Moreover, these new planar functions over $\F_{2^n}$, as an analogue of planar functions in odd characteristic, also bring about finite projective planes and have the same types of applications as odd-characteristic planar functions do.
A planar function over $\F_{2^n}$ not only gives rise to a finite projective plane, a relative difference set and a presemifield, it also leads to a complete set of \textit{mutually unbiased base} (MUB) in $\mathbb{C}^{2^n}$ (where $\mathbb{C}$ is the Hilbert space), an optimal $(2 ^{2n}+2^n,2^n)$ complex codebook meeting the Levenstein bound, and a compressed sensing matrix with low coherence.
These interesting links are the motivations for the authors to study the constructions of planar functions over $\F_{2^n}$.
Substantial efforts have been directed toward such planar functions and their applications in even characteristic in the last several years.
For example, in 2014, K.-U. Schmidt and Y. Zhou \cite{Schmidt} showed that a planar function can be used to produce a finite projective plane, a relative difference set with parameters $(2^n,2^n,2^n,1)$, and certain codes with unusual properties. 
Meanwhile, Z. Zhou et al. \cite{ZDL} also pointed out that codebooks achieving the Levenstein bound can be used in compressed sensing, whose central problem is the construction of the compressed sensing matrix.
Compressed sensing is a novel sampling theory, which provides a fundamentally new approach to data acquisition.
Later in 2015, K. Abdukhalikov \cite{Abdukhalikov} used planar functions to give new explicit constructions of complete sets of MUBs, and showed a connection between quadratic planar functions and commutative presemifields.

A number of recent research works in the theory of coding, polynomials and algebraic geometry have been dedicated to the constructions of planar functions in characteristic two.
The simplest cases are planar monomials.
Three families of planar monomials were got by K.-U. Schmidt and Y. Zhou \cite{Schmidt} and Z. Scherr and M.E. Zieve \cite{Scherr} as follows.
\begin{enumerate}[(i)]
\item $f(x)=cx^{2^m}$, where $c\in\F_{2^n}^{*}$ (Trivial);
\item $f(x)=cx^{2^m+1}$ over $\F_{2^{2m}}$, where $c\in\F_{2^m}^{*}$ and ${\rm Tr}_{\F_{2^m}/\F_{2}}(c)=0$, where ${\rm Tr}_{\F_{2^m}/\F_{2}}(c)=c+c^2+\cdots+c^{2^{m-1}}$ is the absolutely trace function over $\F_{2^m}$ (see \cite[Theorem 3.1]{Schmidt}), and it was generalized by \cite[Theorem 26]{Q} that $c\in\F_{2^{2m}}^{*}$ and ${\rm Tr}_{\F_{2^m}/\F_{2}}(c^{2^m+1})=0$;
\item $f(x)=cx^{2^{2m}+2^m}$, where $c\in\F_{2^{3m}}^{*}$ and $m$ is even, furthermore, $c^{2^{2m}+2^m+1}=1$ and $c^{(2^{2m}+2^m+1)/3}\neq 1$ (see \cite[Theorem 1.1]{Scherr}).
\end{enumerate}

In addition, besides planar monomials, the next simplest cases are planar binomials.
In 2015, S. Hu, et al. \cite{HLZFG} introduced three families of planar binomials over $\F_{2^{3m}}$.

\begin{enumerate}[(i)]
\item $f(x)=a^{-(2^m+1)}x^{2^m+1}+a^{2^{2m}+1}x^{2^{2m}+1}\in\F_{2^{3m}}[x]$, where the parameter $a$ satisfies a trace equation (see \cite[Proposition 3.2]{HLZFG} for more details);
\item $f(x)=x^{2^{m}+1}+x^{2^{2m}+2^{m}}\in\F_{2^{3m}}[x]$, where $m\not\equiv2\pmod3$ (see \cite[Proposition 3.6]{HLZFG});
\item $f(x)=x^{2^{2m}+1}+x^{2^{2m}+2^{m}}\in\F_{2^{3m}}[x]$, where $m\not\equiv1\pmod3$ (see \cite[Proposition 3.8]{HLZFG}).
\end{enumerate}

Recently, D. Bartoli and K.-U. Schmidt \cite{Bartoli-K.-U} classified planar polynomials of degree at most $q^{1/4}$ on $\F_{q}$ and showed that such polynomials are precisely those in which the degree of every monomial is a power of two.
However, it is still open to classify the planar functions. 
Only the classification of the planar monomials was studied (see \cite{PMMEZ,Scherr,Schmidt}), and it was conjectured that there are only three families of such monomials \cite[Conjecture 3.2]{Schmidt} as above.

Furthermore, L. Qu \cite{Q} proposed a new approach to constructing quadratic planar functions.
According to (\ref{pseudo-planar}), a quadratic function $f$ over $\F_{2^n}$ is planar if and only if $$\mathbb{L}_{a}(x):=f(x+a)+f(x)+f(a)+ax$$
is a linearized permutation polynomial for each $a\in\F_{2^n}^{*}$.
L. Qu \cite{Q} converted it to studying the permutation property of dual polynomial $\mathbb{L}_{b}^{*}(a)$ (see \cite[Theorem 14]{Q} for more details), simplifying the conditions of the functions being planar.
Through this approach, L. Qu constructed several new explicit families of quadratic planar functions over $\F_{2^n}$ and revisited some known families.

Both S. Hu et al. \cite{HLZFG} and L. Qu \cite{Q} showed some concrete and separate planar functions over $\F_{2^{n}}$, however, they did not give the complete characterizations of the coefficients of these functions.
Very recently in 2020, D. Bartoli and M. Timpanella \cite{Bartoli} took into account the planar property of this type of function $$f_{a,b}(x)=ax^{2^{2m}+1}+bx^{2^m+1}\in\F_{2^{3m}}[x].$$
By using basic tools from algebraic geometry over finite fields, they completely determined the condition on $a,b\in\F_{2^{3m}}$ such that $f_{a,b}(x)$ is a planar function over $\F_{2^{3m}}$.

In this paper, by combining the approach of L. Qu \cite{Q} and the Lang-Weil bound (see Lemma \ref{Lang-Weil bound}, \ref{lang-weil}), we completely determine the coefficients of four classes of planar functions over $\F_{q^k}$ with $k=2,3,4$, where $q=2^m$ and $m$ is sufficiently large.
One class is a generalization of a known family and another class answers partially an open problem over $\F_{q^2}$ for the binomial case proposed by L. Qu \cite{Q}.
The method of this paper can be summarized as follows. By the approach of L. Qu, the planar property of function $f$ over $\F_{q^k}$ can be transformed into determining whether some equation has no solutions in $\F_{q^k}^{*}$, see Lemmas \ref{t=2}, \ref{t=3} and \ref{t=4}. 
Furthermore, the latter problem is to compute if the number of rational points on $\F_q$  of some hypersurface $\mathcal{C}$ over $\F_{q}$ associated with $f$ is $0$.
For the necessity of the planar property of $f$, the Lang-Weil bound tells us that if $\mathcal{C}$ is absolutely irreducible and $m$ is sufficiently large, then the number of rational points on $\F_q$ of $\mathcal{C}$ is larger than $0$ and thus $f$ is not planar. 
Hence the key problem here is to determine the condition such that $\mathcal{C}$ is not absolutely irreducible.  
As for the sufficiency, it can be proved by the definition of planar functions directly.

To sum up, we prove the following result.
\begin{thm}
\label{main}
Let $q=2^m$ with $m$ sufficiently large and $\F_{q^k}$ be an extension of $\F_{q}$. 
\begin{enumerate}[(1)]
\item When $k=2$, let $P_{1}(x)=ax^{q+1}+bx^{2(q+1)}\in\F_{q^2}[x]$.
Then $P_{1}(x)$ is a planar function if and only if
$$(a,b)=\Big(\frac{s^q}{1+s^{1+q}},0\Big),$$ where $s\in\F_{q^2}$ such that $1+s^{1+q}\neq0$.
\item When $k=3$, let $P_{2}(x)=ax^{q+1}+bx^{q^2+q}+cx^{q^2+1}\in\F_{q^3}[x]$.
Then $P_{2}(x)$ is a planar function if and only if $$(a,b,c)=\Bigg(\frac{v^q+u^{q+q^2}+u^{q^2}v^{1+q}}{1+\Delta},\frac{u^{q^2}v^{q}}{1+\Delta},\frac{v^{q+q^2}+u^{q^2}+u^{1+q^2}v^{q}}{1+\Delta}\Bigg),$$
where $u,v\in\F_{q^3}$ such that $\Delta=uv^q+u^{q}v^{q^2}+u^{q^2}v+u^{1+q+q^2}+v^{1+q+q^2}\neq1$.	
\item When $k=3$, let $P_{3}(x)=ax^{2(q+1)}+bx^{2(q^2+q)}+cx^{2(q^2+1)}\in\F_{q^3}[x]$.
Then $P_{3}(x)$ is a planar function if and only if $b=0$ and $c=a^q$.
\item When $k=4$, let $P_{4}(x)=ax^{q+1}+bx^{q^2+1}+cx^{q^3+1}\in\F_{q^4}[x]$.
Then $P_{4}(x)$ is a planar function if and only if
$$(a,b,c)=\Big(0,\frac{s_{1}^{q^2}}{1+s_{1}^{1+q^2}},0\Big)$$
or $$(a,b,c)=\Bigg(\frac{s_{2}^{q+q^2+q^3}}{1+s_{2}^{1+q+q^2+q^3}},\frac{s_{2}^{q^2+q^3}}{1+s_{2}^{1+q+q^2+q^3}},\frac{s_{2}^{q^3}}{1+s_{2}^{1+q+q^2+q^3}}\Bigg),$$
where $s_{1}\in\F_{q^4}$ such that $1+s_{1}^{1+q^2}\neq0$ and $s_{2}\in\F_{q^4}$ such that $1+s_{2}^{1+q+q^2+q^3}\neq0$.	
\end{enumerate}
\end{thm}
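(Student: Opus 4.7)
The plan is to treat all four parts under a single framework built on L. Qu's reduction together with the Lang--Weil bound, handling necessity by absolute irreducibility of an auxiliary hypersurface and sufficiency by a direct verification of the planarity definition.

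First, for each $P_i$ I would invoke the corresponding reduction Lemma \ref{t=2}, \ref{t=3}, or \ref{t=4} to restate planarity as the non-existence of nonzero solutions, in $\F_{q^k}$, to a polynomial system obtained from the dual polynomial $\mathbb{L}_b^*(a)$. Viewing a single variable together with its $q$-Frobenius conjugates $x, x^q, \ldots, x^{q^{k-1}}$ as independent coordinates $X_0, X_1, \ldots, X_{k-1}$ over $\overline{\F_q}$, this system cuts out an affine hypersurface $\mathcal{C}_i \subset \mathbb{A}^k$ defined over $\F_q$ (after a twist that trivially identifies $X_{j+1}$ with $X_j^q$ on $\F_q$-points). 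The planarity of $P_i$ is then equivalent to $\mathcal{C}_i$ having no $\F_q$-point lying off a specified ``trivial'' subvariety.

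Second, for the necessity direction I argue by contrapositive. If the coefficients do not lie in the parametrized exceptional families listed in the statement, I want to conclude that $\mathcal{C}_i$ has an $\F_q$-rational point outside the trivial locus, which by the reduction forces $P_i$ to fail to be planar. The Lang--Weil bound (Lemma \ref{lang-weil}) produces such a point for $m$ large provided that $\mathcal{C}_i$ is absolutely irreducible of the expected dimension. Thus the main technical task is to factor the defining polynomial of $\mathcal{C}_i$ over $\overline{\F_q}$ and show that it is absolutely irreducible except precisely when the coefficients take the tabulated parametric forms; in those exceptional cases the hypersurface acquires a factorization whose $\F_q$-points all collapse into the trivial locus. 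The algebraic identities governing the parametrizations by $s$, $(u,v)$, and $s_2$ are designed to make this factorization explicit.

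Third, for the sufficiency direction I would substitute each listed coefficient family directly into \eqref{pseudo-planar} and verify permutation behaviour. For part (1) the parametrization collapses $P_1$ into the monomial $\tfrac{s^q}{1+s^{q+1}}x^{q+1}$, so planarity follows from the known Schmidt--Zhou/Qu characterization in case (ii) of the planar monomials, once I confirm that the coefficient satisfies the required trace condition ${\rm Tr}_{\F_{2^m}/\F_2}(c^{2^m+1})=0$. For parts (2) and (4) the $(u,v)$- and $s_2$-parametrizations produce a difference polynomial that can be written as a telescoping sum of $q$-linearized terms, whose kernel is seen to be trivial by an elementary rank argument on the $\F_q$-linear map. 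Part (3) degenerates into $P_3(x)=ax^{2(q+1)}+a^q x^{2(q^2+1)}$, for which planarity follows by squaring from a suitable analogue in part (2).

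The principal obstacle will be the absolute-irreducibility analysis of $\mathcal{C}_i$ in parts (2)--(4). These hypersurfaces are cut out by polynomials mixing several Frobenius powers, and the proof needs to enumerate every way the defining polynomial can split over $\overline{\F_q}$, match each factorization pattern to a polynomial condition on $(a,b,c)$, and then solve that condition explicitly. In part (3) in particular, the very restrictive answer $b=0,\ c=a^q$ indicates that the reducibility locus is extremely thin, so the combinatorics of potential factor degrees and the algebraic manipulations needed to rule out all other factorizations will form the bulk of the technical work; parts (2) and (4) should follow the same template but require carrying one or two additional free parameters through the argument.
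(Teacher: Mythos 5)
Your overall framework is the one the paper uses: Qu's reduction lemmas to turn planarity into the non-vanishing of $g(x)$ on $\F_{q^k}^{*}$, a hypersurface $G(X_0,\dots,X_{k-1})$ recovering $g$ along the Frobenius orbit, Lang--Weil (via the normal-basis descent of Lemma \ref{nonzero}) to produce a nonzero root whenever $G$ is absolutely irreducible, and an enumeration of factorizations otherwise, with sufficiency read off from the resulting norm form. The sufficiency step you describe (a rank argument on the $q$-linearized map attached to the parametrization) is exactly the paper's use of the Dickson determinant applied to $G=G_1^{1+q+\cdots+q^{k-1}}$.

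There is, however, one genuine gap in your necessity argument. You assert that $G$ is absolutely irreducible ``except precisely when the coefficients take the tabulated parametric forms,'' so that reducibility would coincide with planarity. That is false, and Lang--Weil alone does not close the resulting hole: the reducibility locus is strictly larger than the planar locus, and you must still prove non-planarity for every reducible configuration that is not in the tabulated families. The paper does this with two further ideas that your plan omits. First, whenever a linear factor $\ell$ of $G$ coincides with its Frobenius image $\ell^{q^j}$ (e.g.\ $X+\alpha Y$ with $\alpha^{q+1}=1$ in part (1), or $X+\alpha Y+\beta T$ with $\beta=\alpha^{q+1}$, $\alpha^{q^2+q+1}=1$ in parts (2)--(3)), the associated Dickson matrix is singular, so Lemma \ref{Dickson} yields a nonzero $\varepsilon$ with $\ell(\varepsilon,\varepsilon^q,\dots)=0$ and hence $G(\varepsilon,\dots)=0$; these coefficient choices make $G$ reducible yet $P_i$ non-planar. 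Second, in part (4) the pattern $G=G_1G_1^{q}$ or $G=G_1G_1^{q^2}$ with $G_1$ an absolutely irreducible quadric cannot be dismissed by the degree count alone: the paper twists $G_1$ by a constant so that it becomes defined over $\F_q$ (respectively $\F_{q^2}$) and applies Lang--Weil a second time to this quadratic factor to find a rational point. Without these two mechanisms your contrapositive only covers the absolutely irreducible case, and the theorem's ``only if'' direction remains unproved on the reducible-but-not-planar stratum. Once you add them, your plan matches the paper's proof.
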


The paper is organized as follows. In Section 2, we recall some definitions and propose some useful lemmas. In Section 3, we give the complete characterization of the planar functions of the form $ P_{1}(x)=ax^{q+1}+bx^{2(q+1)}$ over $\F_{q^2}$. Although this class of planar function has been obtained by L. Qu \cite[Theroem 26]{Q}, we give the necessity at the same time. 
Moreover, in Section 4, we consider two classes of planar functions over $\F_{q^3}$ with the types $P_{2}(x)=ax^{q+1}+bx^{q^2+q}+cx^{q^2+1}$ and $P_{3}(x)=ax^{2(q+1)}+bx^{2(q^2+q)}+cx^{2(q^2+1)}$.
The former generalizes the main result of D. Bartoli and M. Timpanella \cite[Theorem 2.6]{Bartoli}, and the latter is an extension work of \cite[Theorem 16]{Q}.
Meanwhile, we prove that the condition (i.e. $b=0$ and $c=a^q$) is also necessary for $P_{3}(x)$ being planar.  
Section 5 involves the planar functions over $\F_{q^4}$ with the form $P_{4}(x)=ax^{q+1}+bx^{q^2+1}+cx^{q^3+1}$.
In Section 6, we discuss the equivalence between the semifields produced by the planar functions in Theorem \ref{main} and the corresponding fields.
Finally, Section 7 is the conclusion.

\section{Preliminaries}
In this section, we give necessary definitions and results which will be frequently used in this paper.

\subsection{Presemifield, Semifields and their Equivalence}
A presemifield is a ring with no zero-divisor, and with left and right distributivity \cite{Dembowski}.
A presemifield with multiplicative identity is called a semifield.
A finite presemifield can be obtained from a finite field $(\F_{2^n},+,\cdot)$ by introducing a new product operation , so it is denoted by $(\F_{2^n},+,*)$.
An isotopism between two presemifields $\mathbb{S}_{1}=(\F_{2^n},+,*)$ and $\mathbb{S}_{2}=(\F_{2^n},+,\star)$ is a triple $(M,N,L)$ of bijective linearized mapping $\F_{2^n}\mapsto\F_{2^n}$ such that
$$M(x)*N(y) = L(x\star y), \textup{~for all~}x, y \in\F_{2^n}.$$
Furthermore, if $M = N$, then $\mathbb{S}_{1}$ is strongly isotopic to $\mathbb{S}_{2}$.
In particular, for even characteristic, there is a result on the commutative (pre)semifields obtained by R.S. Coulter and M. Henderson \cite{Coulter-Henderson}.
\begin{lem}
\cite[Corollary 2.7]{Coulter-Henderson}
Two commutative presemifields of even order are isotopic if and only if they are strongly isotopic.
\end{lem}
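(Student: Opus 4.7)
The forward direction is immediate, since a strong isotopism is by definition an isotopism. For the converse, I would follow the approach of Coulter and Henderson, which leverages the special features of commutative presemifields in even characteristic.

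First, given an isotopism $(M,N,L)$ from $\mathbb{S}_1=(\F_{2^n},+,*)$ to $\mathbb{S}_2=(\F_{2^n},+,\star)$, I would exploit the commutativity of $\star$ to deduce $M(x)*N(y)=L(x\star y)=L(y\star x)=M(y)*N(x)$ for all $x,y\in\F_{2^n}$. This symmetry is equivalent to saying that the transpose $(N,M,L)$ is also an isotopism between the same pair, so composing it with the inverse $(M,N,L)^{-1}=(M^{-1},N^{-1},L^{-1})$ yields an autotopism $(M^{-1}N,N^{-1}M,\mathrm{id})$ of $\mathbb{S}_1$. Writing $A:=M^{-1}N$, this autotopism condition unpacks to $A(x)*A^{-1}(y)=x*y$, which after substitution becomes $A(x)*y=x*A(y)$ for all $x,y$: the linear map $A$ is \emph{self-adjoint} with respect to the bilinear form $*$.

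Second, I would use the characteristic-two structure of $\mathbb{S}_1$ to extract a self-adjoint square root of $A$. The crucial observation is that in any commutative presemifield of even order, the squaring map $\sigma_*(z):=z*z$ is $\F_2$-linear (because the cross term $2(x*y)$ vanishes) and bijective (because a presemifield has no zero divisors). Combined with the self-adjointness of $A$ and the finite-dimensional structure of $\F_{2^n}$, this allows a standard linear-algebra argument over $\F_2$ to produce a bijective $\F_2$-linear map $B$ satisfying $B\circ B=A$ and $B(x)*y=x*B(y)$.

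Third, set $T:=M\circ B$; note $T=N\circ B^{-1}$ as well, since $B^2=A=M^{-1}N$. I would then define $L'$ by the rule $L'(x\star y):=T(x)*T(y)$ and argue, using the symmetry from stage one together with the self-adjointness of $B$, that $L'$ is well-defined as an $\F_2$-linear bijection, making $(T,T,L')$ the desired strong isotopism from $\mathbb{S}_1$ to $\mathbb{S}_2$.

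The main obstacle lies in this last step: verifying that $L'$ is actually well-defined, i.e.\ that the product $T(x)*T(y)$ depends on $x$ and $y$ only through $x\star y$. This is where the characteristic-two hypothesis and the commutativity of both presemifields are jointly indispensable; in odd characteristic the squaring map in a commutative presemifield is no longer $\F_2$-linear, and the same construction collapses. Once well-definedness is secured, checking linearity and bijectivity of $L'$ is routine, and the construction terminates.
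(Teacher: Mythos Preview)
The paper does not contain a proof of this lemma; it is stated with a bare citation to \cite[Corollary~2.7]{Coulter-Henderson} and used as a black box. There is therefore no ``paper's own proof'' to compare your proposal against.

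Regarding the proposal itself, the overall strategy is in the spirit of the cited reference, but two points are off. First, the autotopism you extract is miscomputed: from $M(x)*N(y)=N(x)*M(y)$, substituting $a=M(x)$ and $b=N(y)$ gives $a*b=(NM^{-1})(a)*(MN^{-1})(b)$, so the self-adjoint map is $A=NM^{-1}$, not $M^{-1}N$. Second, and more seriously, the well-definedness of $L'$ does not follow from your setup. With $T=MB=NB^{-1}$ you obtain
\[
T(x)*T(y)=M(B(x))*N(B^{-1}(y))=L\big(B(x)\star B^{-1}(y)\big),
\]
so what you actually need is that $x\star y$ determines $B(x)\star B^{-1}(y)$. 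But $B$ was chosen to be self-adjoint with respect to $*$, not $\star$, and nothing you have written forces any compatibility between $B$ and $\star$. Waving at ``characteristic two and commutativity of both presemifields'' does not close this gap; the Coulter--Henderson argument organises the square-root construction and the final isotopism differently so that the analogous dependency is automatic. As written, your third step is a genuine hole rather than a routine verification.
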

The isotopism is the most important equivalence relation between (pre)semifield, since A.A. Albert \cite{Albert} showed that two (pre)semifields coordinate isomorphic planes if and only if they are isotopic. 
By isotopism we can also get a semifield $\mathbb{S}$ from a presemifield $\mathbb{P}$. 
Let $*$ be the multiplication of a presemifield. Then for every $0\neq e\in\mathbb{P}$ we obtain a semifield multiplication $\star$ defined by:
$$(x*e)\star(y*e)=x*y,$$
with the identity $e*e$.
Moreover, if $(\F_{2^n},+,*)$ is commutative then so is each such semifield $(\F_{2^n},+,\star)$.

The study of finite commutative semifields was begun by Dickson.
Since then the only examples found have been Knuth's binary semifields \cite{Knuth}.
Moreover, to the best of the author's knowledge, there are only two types of presemifields with even characteristic, that is, finite fields and the Kantor family of commutative presemifields \cite{Kantor}.
Assume that we have a chain of fields $\F=\F_{0}\supset\F_{1}\supset\cdots\supset\F_{n}$ of characteristic 2 with $[\F:\F_{n}]$ odd and corresponding trace mappings $\textup{Tr}_{i}: \F\mapsto \F_{i}$.
In 2003, W.M. Kantor \cite{Kantor} presented commutative presemifields $\mathbf{B}((\F_{i})^{n}_{0},(\zeta_{i})^{n}_{1})$ on which the multiplication is defined as:
$$x*y=xy+\Big(x\sum_{i=1}^{n}\textup{Tr}_{i}(\zeta_{i}y)+y\sum_{i=1}^{n}\textup{Tr}_{i}(\zeta_{i}x)\Big)^2,$$
where $\zeta_{i}\in\F^{*},1\le i\le n$. These semifields are related to a subfamily of the symplectic spreads constructed in \cite{Kantor-Williams}.
Note that this semifield is a generalization of Knuth's binary semifields \cite{Knuth}, on which the multiplication is defined as:
$$x*y=xy+(x\textup{Tr}(y)+y\textup{Tr}(x))^2,$$
corresponding to the presemifields $\mathbf{B}((\F_{i})^{1}_{0},(1))$. The planar function derived from Knuth's semifield is $(x\textup{Tr}(x))^2$.

It is well known that commutative semifields (up to isotopism) over finite fields of characteristic two correspond to quadratic planar functions (see \cite[Theorem 9]{Abdukhalikov} for instance). 
In particular, if $P$ is a quadratic planar function over $\F_{2^n}$, then $(\F_{2^n},+,*)$ with multiplication $x*y=xy+P(x+y)+P(x)+P(y)$ is a presemifield.
Conversely, if $(\F_{2^n},+,\star)$ is a commutative presemifield, then there also exist a strongly isotopic commutative presemifield $(\F_{2^n},+,*)$ and a planar function $P$ such that $x*y=xy+P(x+y)+P(x)+P(y)$. 
Equivalence between quadratic planar functions is the same as isotopism between the corresponding (pre)semifields (see \cite[Proposition 3.4]{Zhou} for detail).

Let $\mathbb{S}=(\F_{2^n},+,*)$ be a semifield. The subsets
\begin{eqnarray*}
N_{l}(\mathbb{S}) & = & \{\alpha\in\mathbb{S}~|~(\alpha*x)*y=\alpha*(x*y) \textup{~for all~} x,y\in\mathbb{S}\}, \\
N_{m}(\mathbb{S}) & = & \{\alpha\in\mathbb{S}~|~(x*\alpha)*y=x*(\alpha*y) \textup{~for all~} x,y\in\mathbb{S}\}, \\
N_{r}(\mathbb{S}) & = & \{\alpha\in\mathbb{S}~|~(x*y)*\alpha=x*(y*\alpha) \textup{~for all~} x,y\in\mathbb{S}\}.
\end{eqnarray*}
are called the left, middle and right nucleus of $\mathbb{S}$, respectively.
A recent survey about finite semifields can be found in \cite{Lavrauw-Polverino}.
Furthermore, if the planar functions are of Dembowski-Ostrom type, then the equivalence on them is the same as the isotopism of the corresponding semifields.
To check whether a semifield is new or not, a natural way is to determine its left (right) nucleus.

\subsection{Algebraic hypersurfaces and Lang-Weil Bound}
Let $\overline{\F}_{q}$ be the algebraic closure of $\F_{q}$ and $\#M$ be the cardinality of set $M$.
A polynomial $f\in\F_{q}[X_{0},X_{1},\dots,X_{k}]$ is said to be absolutely irreducible if it is irreducible in $\overline{\F}_{q}[X_{0},X_{1},\dots,X_{k}]$. If $f\in\F_{q}[X_{0},X_{1},\dots,X_{k}]$ is homogeneous, define $$V_{\mathbb{P}^{k}(\F_{q})}(f)=\big\{(x_{0}:\cdots:x_{k})\in\mathbb{P}^{k}(\F_{q}):f(x_{0},\dots,x_{k})=0\big\}.$$
In the sequel, we list a well-known result called the Lang-Weil bound (see \cite{ACGM,SLAW} for more details), aiming to estimate the rational points of an absolutely irreducible $\F_{q}$-hypersurface of degree $d$. 
\begin{lem}
\label{Lang-Weil bound}
\cite[Theorem 1]{SLAW}
(Lang-Weil bound) Let $f\in\F_{q}[X_{0},X_{1},\dots,X_{k}]$ be an absolutely irreducible homogeneous polynomial of degree $d$. Then
$$\Big|\#V_{\mathbb{P}^{k}(\F_{q})}(f)-q^{k-1}\Big|\le(d-1)(d-2)q^{k-\frac{3}{2}}+ c(n,d)q^{k-2},$$
where $c(n,d)$ is a constant depending only on $n$ and $d$. 
\end{lem}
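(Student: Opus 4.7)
The plan is to prove the Lang--Weil bound by induction on the ambient projective dimension $k$, reducing the general case to Weil's Riemann hypothesis for curves via iterated generic hyperplane sections.

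For the base case $k = 2$, the variety $V(f)$ is an absolutely irreducible projective plane curve of degree $d$. I would pass to its normalization $\widetilde{V} \to V$, a smooth absolutely irreducible curve over $\F_q$ of geometric genus $g \leq \binom{d-1}{2}$ by the Pl\"ucker genus--degree bound. Weil's theorem for curves then gives
$$\bigl|\,|\widetilde{V}(\F_q)| - (q+1)\,\bigr| \leq 2g\,q^{1/2} \leq (d-1)(d-2)\,q^{1/2},$$
which is exactly the leading term of the desired bound when $k = 2$. The normalization map is bijective away from the singular locus of $V$, whose cardinality is at most $\binom{d-1}{2}$; this discrepancy, together with the $(q+1)$-vs-$q$ shift, is absorbed into $c(n,d)q^{k-2} = c(n,d)$.

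For the inductive step, assume the bound is known in $\mathbb{P}^{k-1}$ and let $X = V(f) \subset \mathbb{P}^k$ be absolutely irreducible of degree $d$. I would double-count the incidence set
$$N := \#\{(p,H) : p \in X(\F_q),\; H \in \check{\mathbb{P}}^k(\F_q),\; p \in H\}.$$
Counting first by $p$ gives $N = |X(\F_q)|\cdot(q^k-1)/(q-1)$, since every $\F_q$-rational point of $\mathbb{P}^k$ lies on exactly $(q^k-1)/(q-1)$ rational hyperplanes. Counting first by $H$ gives $N = \sum_H |(X \cap H)(\F_q)|$. A Bertini-type irreducibility theorem, valid in all characteristics, produces a proper closed ``bad locus'' $\mathcal{B} \subset \check{\mathbb{P}}^k$, of degree bounded purely in terms of $n$ and $d$, such that for each hyperplane $H \notin \mathcal{B}$ the intersection $X \cap H$ is an absolutely irreducible hypersurface of degree $d$ in $H \cong \mathbb{P}^{k-1}$. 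On good hyperplanes I apply the inductive hypothesis; on bad ones I use the crude bound $|(X \cap H)(\F_q)| \leq d(q^{k-2} + q^{k-3} + \cdots + 1)$, obtained by decomposing $X \cap H$ into at most $d$ absolutely irreducible components of degree at most $d$. Rearranging the two expressions for $N$ then isolates the target bound on $|X(\F_q)|$.

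The main obstacle is the Bertini step. Bertini's smoothness theorem is known to fail in positive characteristic, but the weaker irreducibility version I need does remain valid (e.g.\ by the Jouanolou--Zariski formulation), and yields $|\mathcal{B}(\F_q)| = O(q^{k-1})$ with implicit constant depending only on $n$ and $d$. The secondary difficulty is bookkeeping: the inductive error $(d-1)(d-2)q^{k-5/2} + c(n-1,d)q^{k-3}$ is summed over roughly $q^k$ hyperplanes and then divided by $(q^k-1)/(q-1) \sim q^{k-1}$, and one must track this carefully so that the leading term $(d-1)(d-2)q^{k-3/2}$ emerges cleanly while all remaining contributions collapse into a single $c(n,d)q^{k-2}$. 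A heavier but more conceptual alternative would be to invoke the Grothendieck--Lefschetz trace formula together with Deligne's \'etale cohomology bounds, but that machinery is overkill for the stated inequality.
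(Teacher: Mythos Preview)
The paper does not prove this lemma at all: it is quoted verbatim as \cite[Theorem 1]{SLAW} and used as a black box, so there is no ``paper's own proof'' to compare against. Your sketch is therefore not a reproduction of anything in the paper but an attempt to supply the underlying argument from scratch.

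That said, your outline is essentially the original Lang--Weil strategy: induction on the ambient dimension, with Weil's bound for curves as the base case and a hyperplane-section / double-counting argument for the inductive step. The ingredients you flag (Pl\"ucker genus bound, normalization discrepancy, Bertini-type irreducibility for generic hyperplane sections in arbitrary characteristic, crude degree bound on the bad hyperplanes) are the right ones, and the bookkeeping you describe is exactly where the constant $c(n,d)$ comes from. One point to be careful about: you need the bad locus $\mathcal{B}$ in the dual space to satisfy $|\mathcal{B}(\F_q)| = O_{n,d}(q^{k-1})$, i.e.\ a saving of one full power of $q$ compared to $|\check{\mathbb{P}}^k(\F_q)| \sim q^k$, so that the bad-hyperplane contribution, after dividing by $(q^k-1)/(q-1)$, lands in the $q^{k-2}$ term rather than polluting the main term. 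Your statement that $\mathcal{B}$ is a proper closed subset of degree bounded in terms of $n,d$ gives exactly this. The sketch is sound as a proof plan; for the purposes of this paper, however, citing \cite{SLAW} (or \cite{ACGM} for the explicit constant) is all that is expected.
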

Note that when $k = 2$, Lemma \ref{Lang-Weil bound} is actually the so-called Hasse-Weil bound (see \cite{HXD,Stichtenoth} for more details).
Moreover, A. Cafure and G. Matera \cite{ACGM} provided an explicit expression for the constant $c(n,d)$.
\begin{lem}
\label{lang-weil}
\cite[Theorem 5.2]{ACGM}
Let $f\in\F_{q}[X_{0},X_{1},\dots,X_{k}]$ be an absolutely irreducible homogeneous polynomial of degree $d$. Then
$$\Big|\#V_{\mathbb{P}^{k}(\F_{q})}(f)-q^{k-1}\Big|\le(d-1)(d-2)q^{k-\frac{3}{2}}+ 5\cdot d^{\frac{13}{3}}q^{k-2}.$$
\end{lem}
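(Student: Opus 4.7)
The plan is to handle each of the four parts of Theorem \ref{main} by the same three-step template, treating the cases separately but with parallel logic. First, using the dual-polynomial framework of L. Qu — to be packaged in the paper as Lemmas \ref{t=2}, \ref{t=3}, \ref{t=4} — planarity of each quadratic $P_i$ over $\F_{q^k}$ will be rephrased as the nonexistence of nonzero pairs $(a,b) \in \F_{q^k}^2$ satisfying some explicit polynomial identity $E_i(a,b) = 0$, obtained by expanding $\mathbb{L}_{a}(x)$ and passing to its dual $\mathbb{L}_{b}^{*}(a)$ as in \cite[Theorem 14]{Q}. Writing $a$ and $b$ with respect to a fixed $\F_q$-basis of $\F_{q^k}$ then turns $E_i(a,b) = 0$ into a system of polynomial equations over $\F_q$ whose common zero locus defines (after homogenization) a projective hypersurface $\mathcal{C}_i$ over $\F_q$. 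Thus $P_i$ is planar if and only if $\mathcal{C}_i$ admits no nontrivial $\F_q$-rational point.

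For the necessity direction, the strategy is: assume the coefficients of $P_i$ do \emph{not} lie on the parametric family prescribed by the theorem, and prove that the corresponding hypersurface $\mathcal{C}_i$ is absolutely irreducible. Once irreducibility is in hand, Lemma \ref{lang-weil} yields a lower bound of the form $\#\mathcal{C}_i(\F_q) \geq q^{n-1} - (d-1)(d-2)q^{n-3/2} - 5 d^{13/3} q^{n-2}$ that is strictly positive once $m$ is sufficiently large, producing the nonzero solution that contradicts planarity; for $k=2$ this specializes to the Hasse-Weil bound already used by \cite{Bartoli}, and the generalization to higher-dimensional hypersurfaces afforded by Lemma \ref{lang-weil} is what allows us to push past the three-variable setup of that paper into the $k=3,4$ cases. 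The principal obstacle of the entire proof lies in establishing this absolute irreducibility. My approach is to assume a hypothetical nontrivial factorization $\mathcal{C}_i = \mathcal{D}_1 \cdot \mathcal{D}_2$ over $\overline{\F}_q$, use the highest-degree homogeneous components of $E_i$ to restrict the possible bidegrees and leading forms of the factors, and then match sufficiently many coefficients to force the coefficients of $P_i$ to satisfy precisely the equations cutting out the parametric families (together with the degenerate loci such as $1+s^{1+q}=0$). For $P_1$ the matching is short; for $P_2$ — which extends \cite[Theorem 2.6]{Bartoli} while circumventing its heavier algebraic-geometric machinery — and especially for $P_4$ the hypersurfaces have higher degree, and I anticipate needing to analyze the singular locus or to descend to lower-degree subvarieties of $\mathcal{C}_i$ to rule out the remaining factorization patterns.

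For sufficiency, I will substitute the stated parametrizations (in $s$, in $(u,v)$, in $s_1$, or in $s_2$) directly into $E_i(a,b)=0$ and use the Frobenius identities $y^{q^k}=y$ for $y\in\F_{q^k}$ together with the nonvanishing of the denominators $1+s^{1+q}$, $1+\Delta$, $1+s_1^{1+q^2}$, $1+s_2^{1+q+q^2+q^3}$ to conclude that the identity forces $a=0$ or $b=0$. For $P_1$ (where the surviving family $b=0$ recovers the known planar monomial $cx^{q+1}$ of \cite[Theorem 26]{Q}) and for $P_3$ (where the condition $b=0$, $c=a^q$ is the sufficiency side of \cite[Theorem 16]{Q}) the sufficient direction is already in the literature, so the real content in those parts is the necessity. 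I expect the overall proof to be dominated in length by the irreducibility arguments, and within those by $P_4$: its two parametric families correspond to two different ways in which $\mathcal{C}_4$ becomes reducible, so bookkeeping the case split to cover exactly the excluded loci — without missing or double-counting components — will be the most delicate part of the argument.
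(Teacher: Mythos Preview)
Your proposal does not address the statement in question. Lemma \ref{lang-weil} is the Lang--Weil bound with the explicit constant of Cafure and Matera; it is quoted from \cite[Theorem 5.2]{ACGM} and carries no proof in the paper. What you have written is a proof plan for Theorem \ref{main}, not for Lemma \ref{lang-weil}. If you were asked to justify Lemma \ref{lang-weil} itself, the correct response is simply to cite \cite{ACGM}.

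Even read as a plan for Theorem \ref{main}, your outline diverges from the paper in two ways worth flagging. First, the reformulation via Lemmas \ref{t=2}--\ref{t=4} does \emph{not} reduce planarity to the nonexistence of nonzero pairs $(a,b)\in\F_{q^k}^2$ satisfying some $E_i(a,b)=0$; rather, it reduces to the nonexistence of a nonzero $x\in\F_{q^k}$ satisfying a single equation $g(x)=0$, and the paper then introduces a polynomial $G$ in $k$ variables with $G(\varepsilon,\varepsilon^q,\ldots,\varepsilon^{q^{k-1}})=g(\varepsilon)$. Second, the irreducibility analysis in the paper is more concrete than what you describe: rather than studying bidegrees, leading forms, or singular loci, the paper observes that $G$ has low total degree (two, three, or four), enumerates the possible factorizations into linear (or, in the $k=4$ case, also quadratic) factors, exploits the Frobenius symmetry $G^q(\varepsilon,\varepsilon^q,\ldots)=G(\varepsilon,\varepsilon^q,\ldots)$ to force conjugate factors to appear together, and then either matches coefficients to recover the parametric families or invokes the Dickson determinant (Lemma \ref{Dickson}) to produce a nonzero root. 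Sufficiency is likewise handled via Lemma \ref{Dickson}, not by substituting the parametrizations back into an identity.
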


\subsection{Other Results}
In this subsection, we firstly review some necessary definitions and results for future use.
Throughout this paper, we always denote $2^m$ by $q$ and for any element $x\in\F_{q^k}$, $${\rm Tr}_{\F_{q^k}/\F_{q}}(x):=x+x^q+\cdots + x^{q^{k-1}}$$ is the relatively trace function from $\F_{q^k}$ to $\F_{q}$, where $\F_{q^k}$ is an extension of $\F_{q}$ with dimension $k$.
Particularly, when $q=2$, ${\rm Tr}_{\F_{2^k}/\F_{2}}$ is the absolutely trace function over $\F_{2^k}$. 

\begin{lem}
\label{Dickson}
\cite[Page 362]{Lidl}
Let $\F_{q^k}$ be an extension of $\F_{q}$. Then the linearized polynomial $$L(x)=\sum_{i=0}^{k-1}a_{i}x^{q^i}\in\F_{q^k}[x]$$ is a permutation polynomial of $\F_{q^k}$ if and only if the Dickson determinant of $a_{0},a_{1},\cdots,a_{k-1}$ is nonzero, that is,
\begin{displaymath}
\det
\left( \begin{array}{ccccc}
a_{0} & a_{1} & a_{2} & \ldots & a_{k-1} \\
a_{k-1}^q & a_{0}^q & a_{1}^q & \ldots & a_{k-2}^q \\
a_{k-2}^{q^2} & a_{k-1}^{q^2} & a_{0}^{q^2} & \ldots & a_{k-3}^{q^2}\\
\vdots & \vdots & \vdots & & \vdots\\
a_{1}^{q^{k-1}} & a_{2}^{q^{k-1}} & a_{3}^{q^{k-1}} & \ldots & a_{0}^{q^{k-1}} 
\end{array} \right)\neq 0.
\end{displaymath}
\end{lem}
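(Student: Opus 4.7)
The plan is to view $L$ as an $\F_q$-linear endomorphism of $\F_{q^k}$, regarded as a $k$-dimensional $\F_q$-vector space, and to reduce the Dickson determinant criterion to a standard Moore-matrix identity.

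First I would fix an $\F_q$-basis $\beta_1, \beta_2, \ldots, \beta_k$ of $\F_{q^k}$ and form the Moore matrix $B = (\beta_i^{q^j})_{1 \le i \le k,\, 0 \le j \le k-1}$. The classical Moore-matrix theorem tells us $\det(B) \neq 0$ precisely because $\beta_1, \ldots, \beta_k$ are $\F_q$-linearly independent. Since $L$ is $\F_q$-linear, it is a permutation of $\F_{q^k}$ if and only if the images $v_i := L(\beta_i)$ again form an $\F_q$-basis, which by the Moore criterion is equivalent to nonvanishing of the determinant of $V = (v_i^{q^j})_{1 \le i \le k,\, 0 \le j \le k-1}$.

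Next I would establish the matrix identity $V = B \cdot D^{T}$, where $D$ is the Dickson matrix in the statement. Expanding $v_i^{q^m} = \sum_{j=0}^{k-1} a_j^{q^m}\, \beta_i^{q^{j+m}}$, substituting $l = (j+m) \bmod k$, and using $x^{q^k} = x$ on $\F_{q^k}$, one gets $v_i^{q^m} = \sum_{l=0}^{k-1} a_{(l-m)\bmod k}^{q^m}\, \beta_i^{q^l} = \sum_{l} B_{i,l}\, D_{m,l}$, which is precisely the entrywise form of $V = B\, D^{T}$. Taking determinants yields $\det V = \det B \cdot \det D$, and because $\det B \neq 0$, the permutation property of $L$ is equivalent to $\det D \neq 0$, which is the Dickson determinant condition as stated.

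The main obstacle I anticipate is the careful indexing in the Frobenius-sum computation: one must verify that the cyclic shift $(l-m)\bmod k$ together with the Frobenius twist $q^m$ reproduces exactly the pattern displayed in the statement, so that the resulting array is the transpose of $D$ rather than some other rotated Frobenius conjugate. Once this bookkeeping is in order, the argument is a clean combination of two classical facts, namely that linearized polynomials act as $\F_q$-linear maps on $\F_{q^k}$ and that the Moore matrix attached to an $\F_q$-basis is nonsingular; neither step uses characteristic two, so the lemma holds in the generality stated in Lidl and Niederreiter.
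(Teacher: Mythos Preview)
Your argument is correct. The identity $V = B\,D^{T}$ follows exactly from the index computation you outline: with $D_{m,l} = a_{(l-m)\bmod k}^{q^m}$ one checks row by row that this reproduces the displayed Dickson matrix, and then $\det V = \det B \cdot \det D$ together with the Moore-basis criterion gives the equivalence.

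There is nothing to compare against in the paper itself: Lemma~\ref{Dickson} is stated with a citation to \cite[Page~362]{Lidl} and no proof is given. Your Moore-matrix factorisation is the standard textbook route to this result, so your proposal is essentially what one finds in Lidl and Niederreiter.
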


Moreover, the following three lemmas, i.e., Lemmas \ref{t=2}, \ref{t=3} and \ref{t=4},  transform the problem of proving the planar property of a family of quadratic polynomials into that of showing some equation has no solutions in $\F_{q^2}^{*}$, $\F_{q^3}^{*}$ and $\F_{q^4}^{*}$ respectively.

\begin{lem}
\label{t=2}
\cite[Theorem 25]{Q}
Let $q=2^m$ and $$F(x)=\sum_{i=0}^{m-1}c_{i}x^{2^{m+i}+2^i}\in\F_{q^2}[x].$$
Then $F$ is planar over $\F_{q^2}$ if and only if $$x^{2^m+1}+\sum_{i=0}^{m-1}(c_{i}x)^{2^{m-i+1}}+\sum_{i=0}^{m-1}(c_{i}x)^{2^{2m-i+1}}=0$$ has no solutions in $\F_{q^2}^{*}$.
\end{lem}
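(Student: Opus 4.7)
The plan is to reduce planarity to a non-vanishing condition by computing the adjoint of the associated linearized polynomial and exploiting a hidden $\F_q$-structure of $F$.

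First, since $2^{m+i}+2^{i} = 2^{i}(q+1)$, one can write $F$ compactly as $F(x) = L(x^{q+1})$, where $L(y) = \sum_{i=0}^{m-1} c_i y^{2^i}$ is $\F_{2}$-linearized. Because $L$ is additive and $(x+a)^{q+1}+x^{q+1}+a^{q+1} = xa^{q} + ax^{q}$ in characteristic two, the auxiliary polynomial becomes
\[
  \mathbb{L}_{a}(x) := F(x+a)+F(x)+F(a)+ax = L(xa^{q}+ax^{q})+ax.
\]
By the definition of planarity, $F$ is planar iff $\mathbb{L}_{a}$ permutes $\F_{q^{2}}$ for every $a\in\F_{q^{2}}^{*}$, equivalently, iff the adjoint $\mathbb{L}_{a}^{*}$ with respect to the bilinear form $(u,v)\mapsto \textup{Tr}_{\F_{q^{2}}/\F_{2}}(uv)$ permutes $\F_{q^{2}}$ for every $a\in\F_{q^{2}}^{*}$. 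This is precisely the framework underlying Qu's \cite[Theorem 14]{Q}.

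Second, I would compute $\mathbb{L}_{a}^{*}$ explicitly. Reading off the coefficients of $x^{2^{j}}$ in $\mathbb{L}_{a}(x)$ for $j = 0, 1,\dots, 2m-1$ (treating the ranges $j=0$, $1\le j\le m-1$, $j=m$, and $m+1\le j\le 2m-1$ separately), applying the adjoint rule $\bigl(\sum b_{j} x^{2^{j}}\bigr)^{*}(y) = \sum b_{j}^{2^{2m-j}} y^{2^{2m-j}}$, and simplifying with $a^{q^{2}}=a$, one arrives at $\mathbb{L}_{a}^{*}(y) = ay + a^{q} U(y)$, where $U(y) := \sum_{i=0}^{m-1}(c_{i}y)^{2^{m-i}} + \sum_{i=0}^{m-1}(c_{i}y)^{2^{2m-i}}$. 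The crucial observation is that $U(y)\in\F_{q}$: setting $T_{1}(y):=\sum_{i=0}^{m-1}(c_{i}y)^{2^{m-i}}$, one checks $T_{1}(y)^{q} = \sum_{i}(c_{i}y)^{2^{2m-i}}$, so $U(y)=T_{1}(y)+T_{1}(y)^{q} = \textup{Tr}_{\F_{q^{2}}/\F_{q}}(T_{1}(y))\in\F_{q}$.

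Finally, I would translate the kernel condition into the advertised equation. If $\mathbb{L}_{a}^{*}(y)=0$ with $a,y\ne 0$, then $U(y)\ne 0$ and $a^{q-1} = y/U(y)$; since $a\mapsto a^{q-1}$ maps $\F_{q^{2}}^{*}$ onto $\mu_{q+1}:=\{z\in\F_{q^{2}}^{*}:z^{q+1}=1\}$, such an $a$ exists iff $(y/U(y))^{q+1}=1$, that is, $y^{q+1}=U(y)^{q+1}$. Because $U(y)\in\F_{q}$, we have $U(y)^{q+1}=U(y)^{2}$, so $F$ is planar iff no $y\in\F_{q^{2}}^{*}$ satisfies $U(y)^{2}=y^{q+1}$. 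Squaring $U(y)$ term-by-term via $(A+B)^{2}=A^{2}+B^{2}$ and noting $2\cdot 2^{m-i}=2^{m-i+1}$ rewrites this as $x^{q+1}+\sum_{i=0}^{m-1}(c_{i}x)^{2^{m-i+1}}+\sum_{i=0}^{m-1}(c_{i}x)^{2^{2m-i+1}}=0$, the target equation. The main technical hurdle is the bookkeeping in the adjoint computation; once $U(y)\in\F_{q}$ is established, everything else reduces to the standard surjectivity of $a\mapsto a^{q-1}$ onto $\mu_{q+1}$ and elementary arithmetic in characteristic two.
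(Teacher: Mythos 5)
Your proposal is correct: the computation of $\mathbb{L}_a^*(y)=ay+a^qU(y)$, the observation that $U(y)\in\F_q$, and the reduction via the surjectivity of $a\mapsto a^{q-1}$ onto $\mu_{q+1}$ all check out and yield exactly the stated equation $y^{q+1}+U(y)^2=0$. The paper itself states this lemma without proof, citing \cite[Theorem 25]{Q}; your argument is essentially a faithful reconstruction of Qu's dual-polynomial approach, which is precisely the method that reference uses.
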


\begin{lem}
\label{t=3}
\cite[Theorem 15]{Q}
Set $q=2^m$ and $$F(x)=\sum_{i=0}^{2m-1}c_{1,i}x^{2^{m+i}+2^i}+\sum_{i=0}^{m-1}c_{2,i}x^{2^{2m+i}+2^i}\in\F_{q^3}[x].$$
Then $F$ is planar over $\F_{q^3}$ if and only if $$x^{q^2+q+1}+{\rm Tr}_{\F_{q^3}/\F_{q}}(x^qA_{2}^2)=0$$ has no solutions in $\F_{q^3}^{*}$, where
$$A_{2}=\sum_{i=0}^{m-1}(c_{2,i}x)^{2^{3m-i}}+\sum_{i=0}^{2m-1}(c_{1,i}x)^{2^{2m-i}}.$$
\end{lem}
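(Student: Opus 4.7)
The plan is to follow the linearization approach from \cite{Q}. A quadratic polynomial $F$ over $\F_{q^3}$ is planar if and only if, for every $a\in\F_{q^3}^*$, the $\F_2$-linearized polynomial $\mathbb{L}_a(x):=F(x+a)+F(x)+F(a)+ax$ is a permutation of $\F_{q^3}$; equivalently, there is no pair $(a,x)\in(\F_{q^3}^*)^2$ with $\mathbb{L}_a(x)=0$. Using the identity $(x+a)^{2^i+2^j}+x^{2^i+2^j}+a^{2^i+2^j}=x^{2^i}a^{2^j}+x^{2^j}a^{2^i}$ on each monomial of $F$, one obtains the explicit expansion
\begin{equation*}
\mathbb{L}_a(x)=ax+\sum_{i=0}^{2m-1}c_{1,i}\bigl(x^{2^{m+i}}a^{2^i}+x^{2^i}a^{2^{m+i}}\bigr)+\sum_{i=0}^{m-1}c_{2,i}\bigl(x^{2^{2m+i}}a^{2^i}+x^{2^i}a^{2^{2m+i}}\bigr),
\end{equation*}
which is manifestly symmetric in $a$ and $x$.

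The central step is the elimination of $a$ from $\mathbb{L}_a(x)=0$ to obtain a scalar condition depending only on $x$. Fix $x\in\F_{q^3}^*$ and view $\mathbb{L}_a(x)=0$ as a $\F_2$-linearized equation in $a$. The ``high-Frobenius'' mixed terms $c_{1,i}x^{2^i}a^{2^{m+i}}$ and $c_{2,i}x^{2^i}a^{2^{2m+i}}$ can be standardized by raising the equation to the powers $2^{2m-i}$ and $2^{3m-i}$ respectively (exploiting $a^{q^3}=a$ on $\F_{q^3}$); collecting the resulting normalized contributions introduces naturally the expression
\begin{equation*}
A_2=\sum_{i=0}^{m-1}(c_{2,i}x)^{2^{3m-i}}+\sum_{i=0}^{2m-1}(c_{1,i}x)^{2^{2m-i}}
\end{equation*}
as the aggregate $x$-dependent coefficient attached to the $a^{q^2}$-component of the normalized identity. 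Multiplying the normalized equation by a suitable weight (the correct choice, dictated by the norm identity $a^{q^2+q+1}=N_{\F_{q^3}/\F_q}(a)\in\F_q$, turns out to be $x^q$) and applying $\mathrm{Tr}_{\F_{q^3}/\F_q}$ collapses the $a$-dependence along the Frobenius orbit, leaving the scalar identity
\begin{equation*}
x^{q^2+q+1}+\mathrm{Tr}_{\F_{q^3}/\F_q}(x^qA_2^2)=0
\end{equation*}
as a necessary condition on $x$. The converse (that a nonzero solution $x$ of this identity yields a nonzero $a$ with $\mathbb{L}_a(x)=0$) follows from a dimension argument based on the Dickson criterion (Lemma \ref{Dickson}) applied to the $3m\times 3m$ Dickson matrix of the coefficients $\alpha_j(x)$ of $a\mapsto\mathbb{L}_a(x)$: the scalar identity is precisely the ``essential factor'' of that determinant, the residual factor being a unit whenever $x\neq 0$.

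The principal obstacle is the Frobenius bookkeeping in the normalization step — tracking how the exponents $2^{m+i}$ and $2^{2m+i}$ behave under the combined operations of raising to powers and taking traces, and verifying that all cross-terms either cancel or combine precisely into $A_2^2$ with the weight $x^q$. Once this algebraic identity is in place, the symmetry of $\mathbb{L}_a(x)$ in $a$ and $x$ converts the non-planarity condition ``$\exists (a,x)\in(\F_{q^3}^*)^2$ with $\mathbb{L}_a(x)=0$'' into the stated scalar condition on $x$ alone, completing the bi-implication.
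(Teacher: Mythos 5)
First, a point of reference: the paper does not prove this statement at all — Lemma \ref{t=3} is quoted verbatim from \cite[Theorem 15]{Q} — so your proposal has to stand entirely on its own. Its opening is sound: the reduction of planarity to the non-existence of a pair $(a,x)\in(\F_{q^3}^{*})^2$ with $\mathbb{L}_a(x)=0$, and the explicit expansion of $\mathbb{L}_a(x)$ via $(x+a)^{2^i+2^j}+x^{2^i+2^j}+a^{2^i+2^j}=x^{2^i}a^{2^j}+x^{2^j}a^{2^i}$, are correct and are indeed the right starting point.

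The elimination step, however, is where the argument breaks, and it is the entire content of the lemma. ``Raising the equation to the powers $2^{2m-i}$ and $2^{3m-i}$ and collecting the contributions'' is not a legitimate operation producing $A_2$: you cannot raise different terms of one equation to different powers; raising the whole identity $\mathbb{L}_a(x)=0$ to a power $2^t$ and summing over $t$ replaces $\mathbb{L}_a$ by a composition $M\circ\mathbb{L}_a$ for some linearized $M$, which preserves the kernel only when $M$ is bijective — a choice you neither make nor justify. Likewise, multiplying by $x^q$ and applying ${\rm Tr}_{\F_{q^3}/\F_q}$ does not ``collapse the $a$-dependence'': ${\rm Tr}_{\F_{q^3}/\F_q}\big(x^q\,\mathbb{L}_a(x)\big)=0$ still contains $a$, whereas the target identity does not. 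Finally, the converse is reduced to the unproved assertion that the $3m\times 3m$ Dickson determinant of $a\mapsto\mathbb{L}_a(x)$ factors as the stated cubic expression times a unit. The idea that actually makes the lemma work (this is \cite[Theorem 14]{Q}) is the passage to the \emph{adjoint} polynomial: writing $\mathbb{L}_a(x)=\sum_j\beta_j(a)x^{2^j}$ and replacing it by $\mathbb{L}_a^{*}(x)=\sum_j\beta_j(a)^{2^{3m-j}}x^{2^{3m-j}}$, which is a permutation of $x$ exactly when $\mathbb{L}_a$ is. A direct computation shows $\mathbb{L}_a^{*}(x)=xa+A_2^{q}a^{q}+A_2a^{q^2}$ with $A_2$ exactly as in the statement; i.e., the adjoint is $q$-linearized in $a$ with only the three coefficients $(x,A_2^{q},A_2)$. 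Applying Lemma \ref{Dickson} with $k=3$ to this polynomial in $a$ gives the determinant $x^{1+q+q^2}+A_2^{q(1+q+q^2)}+A_2^{1+q+q^2}+{\rm Tr}_{\F_{q^3}/\F_q}(x^qA_2^2)$; since $A_2^{q(1+q+q^2)}=A_2^{1+q+q^2}$, these two norm terms cancel in characteristic two, leaving precisely $x^{q^2+q+1}+{\rm Tr}_{\F_{q^3}/\F_q}(x^qA_2^2)$, and both directions of the equivalence then follow at once from the Dickson criterion. Your sketch gestures at these ingredients but supplies neither the adjoint step nor the determinant computation, so as written it is not a proof.
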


\begin{lem}
\label{t=4}
\cite[Theorem 22]{Q}
Assume $q=2^m$ and $$F(x)=\sum_{i=0}^{3m-1}c_{1,i}x^{2^i(q+1)}+\sum_{i=0}^{2m-1}c_{2,i}x^{2^i(q^2+1)}+\sum_{i=0}^{m-1}c_{3,i}x^{2^i(q^3+1)}\in\F_{q^4}[x].$$
Then $F$ is planar over $\F_{q^4}$ if and only if $$x^{q^3+q^2+q+1}+A_{2}^{2q+2}+(A_{3}^{2q^2+2}+A_{3}^{2q^3+2q})+(x^{q^2+1}A_{2}^{2q}+x^{q^3+q}A_{2}^2)+{\rm Tr}_{\F_{q^4}/\F_{q}}(x^{q^2+q}A_{3}^2)=0$$ has no solutions in $\F_{q^4}^{*}$, where
\begin{displaymath}
\left\{ \begin{array}{l}
\displaystyle
A_{2}=\sum_{i=0}^{2m-1}\Big((c_{2,i}x)^{2^{4m-i}}+(c_{2,i}x)^{2^{2m-i}}\Big), \\
\displaystyle
A_{3}=\sum_{i=0}^{m-1}(c_{3,i}x)^{2^{4m-i}}+\sum_{i=0}^{3m-1}(c_{1,i}x)^{2^{3m-i}}.
\end{array} \right.
\end{displaymath}
\end{lem}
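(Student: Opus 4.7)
The plan is to adapt Qu's dual-polynomial technique underlying Lemmas~\ref{t=2} and~\ref{t=3} to the degree-four extension $\F_{q^4}/\F_q$. By (\ref{pseudo-planar}), $F$ is planar if and only if the $\F_2$-linearized polynomial
$$\mathbb{L}_a(x) := F(x+a) + F(x) + F(a) + ax$$
is a permutation of $\F_{q^4}$ for every $a\in\F_{q^4}^{\ast}$. Since $F$ is of Dembowski--Ostrom type, $\mathbb{L}_a(x)-ax$ is $\F_2$-bilinear and symmetric in $(x,a)$ in characteristic two, so planarity is equivalent to the equation $\mathbb{L}_a(x)=0$ admitting no solution with $a,x\in\F_{q^4}^{\ast}$. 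I would start by writing this equation out explicitly, splitting the contributions by the three coefficient families $c_{1,\cdot},c_{2,\cdot},c_{3,\cdot}$, and noting that the exponent-difference $2m$ attached to the $c_{2,i}$-terms is fixed by the $q^2$-Frobenius, while the differences $m$ and $3m$ attached to the $c_{1,i}$- and $c_{3,i}$-terms are interchanged by the $q^2$-Frobenius. This dichotomy is what ultimately forces $A_2$ to be built solely from the $c_{2,i}$ while $A_3$ pools $c_{1,i}$ and $c_{3,i}$ together.

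The decisive step is to eliminate $a$. I would apply the three nontrivial $q$-Frobenius powers to $\mathbb{L}_a(x)=0$ to obtain four equations, then re-index each sum using that $y^{2^{4m}}=y$ in $\F_{q^4}$. The four equations can be regrouped as an $\F_{q^4}$-linear system on the Frobenius tuple $(a,a^q,a^{q^2},a^{q^3})$, whose coefficient matrix is built from $x$, $A_2$, $A_3$ and their Frobenius conjugates. This system has a nonzero Frobenius-compatible solution exactly when the associated $4\times 4$ determinant vanishes; direct expansion yields precisely the identity in the statement. Here the fact that $A_2\in\F_{q^2}$ (so $A_2^{2q+2}\in\F_q$ is already a full $\F_{q^2}/\F_q$-norm) explains the clean appearance of the $A_2$-terms, while the genuinely $\F_{q^4}$-valued $A_3$ contributes both the self-paired piece $A_3^{2q^2+2}+A_3^{2q^3+2q}$ and the trace term $\mathrm{Tr}_{\F_{q^4}/\F_q}(x^{q^2+q}A_3^2)$.

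For the converse, starting from any $x\in\F_{q^4}^{\ast}$ satisfying the identity, I would back-solve the $4\times 4$ system to produce a nonzero $a\in\F_{q^4}$ with $\mathbb{L}_a(x)=0$; the built-in consistency under Frobenius ensures that the solution is an honest Frobenius tuple of an element of $\F_{q^4}$. The principal obstacle will be the combinatorial bookkeeping needed to verify that the index ranges $\{0,\ldots,3m-1\}$, $\{0,\ldots,2m-1\}$ and $\{0,\ldots,m-1\}$ interact under the four Frobenius powers to produce precisely $A_2^{2q+2}$, $x^{q^2+1}A_2^{2q}$, $x^{q^3+q}A_2^2$, $A_3^{2q^2+2}+A_3^{2q^3+2q}$, and $\mathrm{Tr}_{\F_{q^4}/\F_q}(x^{q^2+q}A_3^2)$ without any leftover cross-terms; this careful accounting reflects, at the level of $\F_q$-subspaces of $\F_{q^4}$, the asymmetric roles of the ``middle'' and ``non-middle'' exponent differences.
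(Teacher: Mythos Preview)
This lemma is not proved in the present paper; it is quoted as Theorem~22 of~\cite{Q} and used throughout as a black box. There is therefore no proof here against which to compare your proposal.

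On the substance of your sketch: the overall strategy (exploit the $(a,x)$-symmetry of $\mathbb{L}_a(x)$, then for each fixed $x$ reduce the permutation question to a determinant condition) matches Qu's ``dual polynomial'' philosophy as summarized in the introduction. However, the step ``the four equations can be regrouped as an $\F_{q^4}$-linear system on the Frobenius tuple $(a,a^q,a^{q^2},a^{q^3})$'' is where the real work hides, and as stated it does not go through. The map $a\mapsto\mathbb{L}_a(x)$ is only $\F_2$-linear, with up to $4m$ distinct Frobenius components $a^{2^j}$ ($0\le j\le 4m-1$); merely applying the $q$-Frobenius three times does not collapse the system to four unknowns $(a,a^q,a^{q^2},a^{q^3})$. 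The passage from the $\F_2$-linearized $\mathbb{L}_a(x)$ to a genuine $\F_q$-linearized dual polynomial $\mathbb{L}^*_x(a)$ (whose $4\times4$ Dickson determinant is the displayed expression) is exactly the content of Qu's Theorem~14, and it relies on the special exponent shape $2^i(q^j+1)$ together with an adjoint construction with respect to the trace form --- this is why $A_2,A_3$ involve terms like $(c_{\cdot,i}x)^{2^{4m-i}}$ and $(c_{\cdot,i}x)^{2^{3m-i}}$. Your observation about the $q^2$-Frobenius symmetry separating the $c_{2,\cdot}$ from the $c_{1,\cdot},c_{3,\cdot}$ contributions is on target, but the determinant identity itself must be imported from~\cite{Q} (or reproved via that adjoint construction) rather than obtained by the direct Frobenius-elimination you outline; the same remark applies to your converse step of ``back-solving the $4\times4$ system''.
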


Then, as stated before, the problem of showing the planar property can be transformed into that of proving some relative equation has no nonzero solutions. 
Next we recall the usage of the normal basis that dedicated to this problem.
It is well known that there exists some bijection between $\F_{q^k}$ and $\F_q^k$. For instance, 	let $\{\xi,\xi^q,\cdots,\xi^{q^{k-1}}\}$ be a normal basis of $\F_{q^k}$ over $\F_{q}$. Then for any element $\varepsilon\in\F_{q^k}$, there exists a unique element $(\varepsilon_{0}, \varepsilon_{1}, \cdots, \varepsilon_{k-1})\in \F_q^k$ such that $\varepsilon=\varepsilon_{0}\xi+\varepsilon_{1}\xi^q+\cdots+\varepsilon_{k-1}\xi^{q^{k-1}},$ and vice versa. For convenience, we denote the bijection stated above by $\varPhi: \F_{q}^k\mapsto \F_{q^k}$ and its inverse is denoted by $\varPhi^{-1}$.   
Let  $h(x)\in\F_{q^k}[x]$  and $g(x)$ be a mapping from $\F_{q^k}$ to $\F_{q}$ defined by
\begin{equation}
\label{g}g(x)={\rm Tr}_{\F_{q^k}/\F_{q}}\big(h(x)\big)+x^{1+q+q^2+\cdots+q^{k-1}}.
\end{equation} 	
Clearly, there exists some polynomial over $\F_{q^k}$ denoted by $G(X_{0},X_{1},\cdots,X_{k-1})$ such that for any $\varepsilon\in\F_{q^k}$, $$ G(\varepsilon,\varepsilon^q,\cdots,\varepsilon^{q^{k-1}}) = g(\varepsilon). $$ We assume that
\begin{equation}
\label{G}G(X_{0},X_{1},\cdots,X_{k-1})=\varphi(X_{0},X_{1},\cdots,X_{k-1})+X_{0}X_{1} \cdots X_{k-1},
\end{equation} where $\varphi(\varepsilon,\varepsilon^q,\cdots,\varepsilon^{q^{k-1}}) = {\rm Tr}_{\F_{q^k}/\F_{q}}\big(h(\varepsilon)\big)$ for any $\varepsilon\in\F_{q^k}$. Note that the coefficients of $G$ are in $\F_{q^k}$.  For example, if $k=2$ and $h(x)=ax^2+bx\in\F_{q^2}[x]$, then we can assume that $\varphi(X_0,X_1)=aX_0^2+bX_0+a^qX_1^2+b^qX_1$. 

\begin{lem}
\label{nonzero}
Let $k$ be a given positive integer, $q=2^m$, $g$ and $G$ be defined as in \eqref{g} and \eqref{G}, respectively. If $G$ is absolutely irreducible and $m$ is sufficiently large, then the equation $g(x)=0$ has at least one nonzero solution in $\F_{q^k}$.
\end{lem}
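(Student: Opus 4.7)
The plan is to translate the equation $g(x)=0$ in $\F_{q^k}$ into a polynomial equation over $\F_q$ in $k$ affine variables and then invoke the Lang--Weil bound (Lemma~\ref{lang-weil}). Fix a normal basis $\{\xi,\xi^q,\ldots,\xi^{q^{k-1}}\}$ of $\F_{q^k}/\F_q$ with associated bijection $\varPhi$. For $\varepsilon=\varPhi(x_0,\ldots,x_{k-1})=\sum_i x_i\xi^{q^i}$ one has $\varepsilon^{q^r}=\sum_j x_j\xi^{q^{j+r}}$, so the tuple $(\varepsilon,\varepsilon^q,\ldots,\varepsilon^{q^{k-1}})$ is the image of $(x_0,\ldots,x_{k-1})$ under the Moore matrix $M=(\xi^{q^{i+j}})_{0\le i,j\le k-1}$, which is invertible over $\F_{q^k}$ by the normal basis theorem. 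Setting $\tilde G(x):=G(Mx)\in\F_{q^k}[x_0,\ldots,x_{k-1}]$, by construction $\tilde G(x)=g(\varPhi(x))$ on $\F_q^k$ and $\deg\tilde G=\deg G=:d$.

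The first key step is to descend $\tilde G$ to $\F_q[x_0,\ldots,x_{k-1}]$ and verify that it remains absolutely irreducible. Because $g$ is $\F_q$-valued, the polynomial $\tilde G-\tilde G^{(q)}$, where $\tilde G^{(q)}$ is obtained by raising every coefficient of $\tilde G$ to the $q$-th power, vanishes identically on $\F_q^k$. The vanishing ideal of $\F_q^k$ in $\F_{q^k}[x_0,\ldots,x_{k-1}]$ is generated by $\{x_i^q-x_i\}_{i=0}^{k-1}$, and any nonzero element of this ideal has degree at least $q$. Since $\deg(\tilde G-\tilde G^{(q)})\le d$ and $q=2^m>d$ once $m$ is large enough, we obtain $\tilde G=\tilde G^{(q)}$, so every coefficient of $\tilde G$ is fixed by Frobenius and thus lies in $\F_q$. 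Moreover, $X=Mx$ is an invertible linear change of coordinates over $\overline{\F}_q$, which preserves absolute irreducibility, so $\tilde G$ is absolutely irreducible over $\overline{\F}_q$ as $G$ is.

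Next I would homogenize $\tilde G$ with a new variable $T$ to obtain an absolutely irreducible homogeneous polynomial $\tilde G^*\in\F_q[x_0,\ldots,x_{k-1},T]$ of degree $d$ and apply Lemma~\ref{lang-weil}:
$$\Big|\#V_{\mathbb{P}^k(\F_q)}(\tilde G^*)-q^{k-1}\Big|\le(d-1)(d-2)q^{k-3/2}+5d^{13/3}q^{k-2}.$$
The rational points at infinity ($T=0$) lie on a degree-$d$ hypersurface in $\mathbb{P}^{k-1}$ and number at most $O(q^{k-2})$ by an elementary (Serre-type) bound. Therefore
$$\#\{x\in\F_q^k:\tilde G(x)=0\}\ge q^{k-1}-(d-1)(d-2)q^{k-3/2}-O(q^{k-2}),$$
which exceeds $2$ for $m$ sufficiently large. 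Via $\varPhi$ this produces at least two distinct $\varepsilon\in\F_{q^k}$ with $g(\varepsilon)=0$, hence at least one nonzero solution, as desired.

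The main obstacle is the descent step: once one knows that the Moore-substituted polynomial $\tilde G$ lies in $\F_q[x_0,\ldots,x_{k-1}]$ and remains absolutely irreducible, the Lang--Weil estimate closes the argument in a standard way. The technical bound $q>d$ needed for the descent is automatic under the hypothesis that $m$ is sufficiently large, since $d$ depends only on the fixed polynomial $G$ and not on $m$.
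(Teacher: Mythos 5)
Your proposal is correct and follows essentially the same route as the paper: substitute via a normal basis, descend the resulting polynomial to $\F_q$, homogenize, and apply the Lang--Weil bound together with a trivial bound on the points at infinity. The only difference is in one justification: the paper checks that the coefficients of the substituted polynomial lie in $\F_q$ by direct inspection (trace-shaped coefficients for the $\varphi$-part and Frobenius-invariance of $Y_0Y_1\cdots Y_{k-1}$), whereas you deduce it from the identity $\tilde G=\tilde G^{(q)}$ on $\F_q^k$ plus a degree count $d<q$ --- both are valid, and your explicit observation that one needs at least two affine points to guarantee a \emph{nonzero} solution is, if anything, slightly more careful than the paper's wording.
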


\begin{proof}
Let $\{\xi,\xi^q,\cdots,\xi^{q^{k-1}}\}$ be a normal basis of $\F_{q^k}$ over $\F_{q}$. We firstly define some polynomial $\Psi$ from $\F_q^k$ to $\F_q$ which is relative to $g$ and $G$. 	
Denote $Y_{0} = \varPhi(X_0,X_1,\cdots,X_{k-1}) = \xi X_{0}+\xi^q X_{1}+\cdots+\xi^{q^{k-1}} X_{k-1}$ from $\F_q^k$ to $\F_{q^k}$ and $Y_{j}=Y_{0}^{q^{j}}$ for $1\le j\le k-1$. Then we define
\begin{eqnarray*}
\label{Psi}
&&\Psi(X_{0},X_{1},\cdots,X_{k-1})= G(Y_{0},Y_{1},\cdots,Y_{k-1}) \nonumber\\
&=&G(\xi X_{0}+\xi^q X_{1}+\cdots+\xi^{q^{k-1}} X_{k-1},\cdots,\xi X_{1}+\xi^q X_{2}+\cdots+\xi^{q^{k-1}} X_{0}).
\end{eqnarray*}
Clearly, $\Psi$ is a mapping from $\F_q^k$ to $\F_q$. 
Then we will show that the polynomial $\Psi(X_{0}, X_{1},\cdots, X_{k-1})$ is in fact defined over $\F_{q}$.
Its terms can be divided into two parts.
	
The first part is $\varphi(Y_{0},Y_{1},\cdots,Y_{k-1})$. 
According to our definition, we have $\varphi( Y_{0},Y_{1},\cdots,Y_{k-1}) = {\rm Tr}_{\F_{q^k}/\F_{q}}\big(h(\xi X_{0}+\xi^q X_{1}+\cdots+\xi^{q^{k-1}} X_{k-1})\big)$.   
All the coefficients in the expanding expressions are of the form ${\rm Tr}_{\F_{q^k}/\F_{q}}(\cdot)$, which belong to $\F_{q}$ directly.
	
The second part is $Y_{0}Y_{1} \cdots Y_{k-1}$. It is clear that 
$$(Y_{0}Y_{1} \cdots Y_{k-1})^q=Y_{0}^{(1+q+\cdots+q^{k-1})\cdot q}=Y_{0}^{1+q+\cdots+q^{k-1}}=Y_{0}Y_{1} \cdots Y_{k-1}.$$
Then it follows that  $Y_{1}Y_{2} \cdots Y_{k}$ is in fact a polynomial over $\F_q$. 
	
Therefore, the polynomial $\Psi$ is defined on $\F_{q}$ and also absolutely irreducible due to the property of $G$. Moreover, let $\Psi^{'}(X_0,X_1,\cdots,X_{k-1},X_k)$ be the homogenization of $\Psi(X_{0},X_{1},\cdots,X_{k-1})$. Assume that the degree of $\Psi$ is $d$, so is that of $\Psi^{'}$. Let $\mathcal{C}$ be the hypersurface defined by $\Psi^{'}$.  Then according to Lemma \ref{lang-weil}, the number of rational points over $\F_q$ in $\mathcal{C}$  is at least
$$ q^{k-1}-(d-1)(d-2)q^{k-\frac{3}{2}}-5\cdot d^{\frac{13}{3}}q^{k-2}.$$ Moreover, let $X_k=0$ in $\Psi^{'}$. Then we can see that the number of rational points over $\F_q$ in $\mathcal{C}$  at infinity is at most $d\cdot q^{k-2}$. Therefore, if $m$ is sufficiently large, $\mathcal{C}$ has rational points over $\F_q$.
Thus the equation $g(x)=0$ has at least one nonzero solution in $\F_{q^k}$.
\end{proof}

In the following three sections, we consider the planar property of four classes of explicit functions over $\F_{q^k}$ with $k=2,3,4$ respectively.

\section{Planar functions over $\F_{q^2}$}
In this section, we mainly investigate one class of binomial planar functions over $\F_{q^2}$ and give the proof of (1) of Theorem \ref{main}.\\
\textit{Proof of (1) of Theorem \ref{main}.}
According to Lemma \ref{t=2}, we know that $P_{1}(x)$ is a planar function if and only if $g(x)=0$ has no solutions in $\F_{q^2}^{*}$, where 
$$g(x) = x^{q+1}+(ax)^{2q}+(bx)^q+(ax)^2+bx.$$
Let
\begin{equation}
\label{curve-t=2}
G(X,Y)=XY+a^2X^2+a^{2q}Y^2+bX+b^qY.
\end{equation}
Then $G(\varepsilon,\varepsilon^q)\neq0$ for any $\varepsilon\in\F_{q^2}^{*}$ is exactly equivalent to that $g(x)=0$ has no solutions in $\F_{q^2}^{*}$, i.e., $P_1(x)$ is a planar function.

If $G(X,Y)$ is absolutely irreducible, we know that $P_{1}(x)$ is not planar according to Lemma \ref{nonzero}. 
Hence it suffices to consider only when $G(X,Y)$ is not absolutely irreducible.

We now determine the conditions of $a,b$ such that $\mathcal{C}_{a,b}$ defined by $G(X,Y)$ is not absolutely irreducible.
If so, it is clear that $G(X,Y)$ can only be factorized as $G(X,Y)=G_{1}G_{2}$, where $\deg(G_{1})=\deg(G_{2})=1$.
Moreover, at most one constant term of $G_{1}$ and $G_{2}$ is nonzero.
	
\textbf{Case 1:} One constant term of $G_{1}$ and $G_{2}$ is nonzero.
W.l.o.g., we assume that the constant term of $G_{1}$ is nonzero.
Obviously, $G_{1}^q$ does not coincide with $G_{2}$.
So we have $G_{1}^q$ and $G_{1}$ must coincide and the same with $G_{2}^q$ and $G_{2}$.
This indicates that $G_{2}$ must contain $X$ and $Y$ simultaneously, let $G_{2}=X+\alpha Y$, where $\alpha\in\overline{\F}_{q}^{*}$.
The two lines $X+\alpha Y=0$ and $Y+\alpha^q X=0$ coincide if and only if
\begin{displaymath}
\alpha^{q+1}=1.
\end{displaymath}
Hence, $\alpha\in\F_{q^2}^{*}$. In this case, the determinant of the matrix
\begin{displaymath}
M=\left( \begin{array}{cc}
1 & \alpha  \\
\alpha^q & 1  
\end{array} \right)
\end{displaymath}
vanishes. By Lemma \ref{Dickson}, there exists some $\varepsilon\in \F_{q^2}^{*}$ such that  $\varepsilon+\alpha \varepsilon^q = 0$. Then $$G(\varepsilon,\varepsilon^q) = G_1(\varepsilon,\varepsilon^q) G_2(\varepsilon,\varepsilon^q) = G_1(\varepsilon,\varepsilon^q) (\varepsilon+\alpha \varepsilon^q) = 0$$ and 
thus $P_{1}(x)$ is not planar in this case. 
	
\textbf{Case 2:} Both the constant terms of $G_{1}$ and $G_{2}$ are zero. From the proof of Case 1,  $P_{1}(x)$ is not planar if $G_{i}^q$ and $G_{i}$ coincide for $i=1 $ or $2$. Next we consider the case $G_{1}=G_{2}^q$, i.e.,
\begin{equation}
\label{X,Y-2}
G(X,Y)=G_{2}G_{2}^q=G_{2}^{1+q}.
\end{equation}
Since there are terms $X^2$ and $XY$ in the expression of $G(X,Y)$, $G_{2}$ must contain $X$ and $Y$ simultaneously.
Let $G_{2}=X+\beta Y$, where $\beta^{1+q}\neq1$.
Expanding (\ref{X,Y-2}), we have
\begin{equation}
\label{Expanding-X,Y-2}
G(X,Y)=(1+\beta^{1+q})XY+\beta Y^2+\beta^qX^2.
\end{equation}
Comparing the coefficients of (\ref{curve-t=2}) and (\ref{Expanding-X,Y-2}), we find that
$$(a^2,b)=\Bigg(\frac{\beta^{q}}{1+\beta^{1+q}},0\Bigg),$$
where $\beta^{1+q}\neq1$.
Let $\beta=s^2$.
Then
$$(a,b)=\Bigg(\frac{s^{q}}{1+s^{1+q}},0\Bigg),$$
where $s^{1+q}\neq1$.
	
It follows from (\ref{X,Y-2}) that  for any $\varepsilon\in\F_{q^2}^{*}$, $$G(\varepsilon,\varepsilon^q)=(\varepsilon+\beta\varepsilon^{q})^{1+q}.$$ 
The matrix
\begin{displaymath}
M=\left( \begin{array}{cc}
1 & \beta  \\
\beta^q & 1 
\end{array} \right)
\end{displaymath}
has determinant $1+\beta^{1+q}$, which does not equal to 0. From Lemma \ref{Dickson}, there does not exist $\varepsilon\in\F_{q^2}^{*}$ such that $\varepsilon+\beta \varepsilon^q =0$,
that is to say, $G(\varepsilon,\varepsilon^q)\neq0$ for any $\varepsilon\in\F_{q^2}^{*}$. Hence $P_{1}(x)$ is planar over $\F_{q^2}$.
	
Thus the proof is completed.\hfill$\square$

\begin{rem}
\emph{Note that the planar function obtained in (1) of Theorem \ref{main} is actually a monomial, which has been presented by L. Qu \cite[Theorem 26]{Q} as follows: if $c\in\F_{q^2}^{*}$ and 	${\rm Tr}_{\F_{q}/\F_{2}}(c^{1+q})=0$, then $f(x)=cx^{q+1}$ is a planar function. In the following, we show that the condition of $c$ as above is indeed equivalent to that of $a$ in (1) of Theorem \ref{main}. }
\end{rem}

\begin{pro}
\label{M=N} 	
Let $q=2^m$, $M=\displaystyle\Big\{c\in\F_{q^2}~\Big|~{\rm Tr}_{\F_{q}/\F_{2}}(c^{1+q})=0\Big\}$ and $N=\displaystyle\Big\{a\in\F_{q^2}~\Big|~a=\frac{s^{q}}{1+s^{1+q}},1+s^{1+q}\neq0,s\in\F_{q^2}\Big\}$. Then $M=N$.
\end{pro}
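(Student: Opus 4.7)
The plan is to prove both inclusions $N \subseteq M$ and $M \subseteq N$ separately, relying on the identity $\mathrm{Tr}(x^2) = \mathrm{Tr}(x)$ in characteristic $2$, the surjectivity of the norm $\F_{q^2}^* \to \F_q^*$, and the bijectivity of the $q$-th power map on the unit circle $\mu_{q+1} := \{z \in \F_{q^2}^*: z^{1+q} = 1\}$ (which holds because $q$ is a power of $2$, so $\gcd(q, q+1) = 1$).

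For $N \subseteq M$, I would first observe that for $s \in \F_{q^2}$, the denominator $1 + s^{1+q}$ lies in $\F_q$ since $(s^{1+q})^q = s^{q+q^2} = s^{1+q}$. Setting $a = s^q/(1+s^{1+q})$ and $t = s^{1+q} \in \F_q$, a direct computation gives
\[
a^{1+q} \;=\; \frac{s^{1+q}}{(1+s^{1+q})^2} \;=\; \frac{t}{(1+t)^2} \;=\; \frac{1}{1+t} + \frac{1}{(1+t)^2},
\]
where the last equality uses characteristic $2$. Applying $\mathrm{Tr}_{\F_q/\F_2}$ and using $\mathrm{Tr}(x^2) = \mathrm{Tr}(x)$ shows the trace vanishes, so $a \in M$.

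For the reverse inclusion $M \subseteq N$, given $c \in M$ I would first dispose of $c = 0$ (take $s = 0$). For $c \neq 0$, set $\gamma := c^{1+q} \in \F_q^*$, which satisfies $\mathrm{Tr}_{\F_q/\F_2}(\gamma) = 0$. The goal is to reverse-engineer $s$. I would first seek $u \in \F_q$ satisfying $\gamma(1+u)^2 = u$, which in characteristic $2$ rewrites as $\gamma u^2 + u + \gamma = 0$; multiplying by $\gamma$ and substituting $w = \gamma u$ turns this into the Artin-Schreier equation $w^2 + w = \gamma^2$, solvable in $\F_q$ exactly because $\mathrm{Tr}(\gamma^2) = \mathrm{Tr}(\gamma) = 0$. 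A quick check rules out $u = 1$ (which would give the contradiction $1 = 0$), so $1 + u \neq 0$, and $u \neq 0$ since $\gamma \neq 0$.

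The subtle step — and the one I expect to be the main obstacle — is promoting this $u \in \F_q^*$ to an actual $s \in \F_{q^2}$ with $s^q/(1+s^{1+q}) = c$, not merely $s^q/(1+s^{1+q})$ having the correct norm. Surjectivity of the norm yields some $s_0 \in \F_{q^2}^*$ with $s_0^{1+q} = u$, but $s_0$ is only determined up to multiplication by $\mu_{q+1}$. The key observation is that the element $\eta := c(1+u)/s_0^q$ has norm $c^{1+q}(1+u)^2/u = \gamma(1+u)^2/u = 1$, so $\eta \in \mu_{q+1}$; since $x \mapsto x^q$ is a bijection of the cyclic group $\mu_{q+1}$ of odd order $q+1$, there is a unique $\zeta \in \mu_{q+1}$ with $\zeta^q = \eta$. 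Taking $s := \zeta s_0$ preserves $s^{1+q} = u$ while arranging $s^q/(1+s^{1+q}) = \zeta^q s_0^q/(1+u) = c$, completing the proof.
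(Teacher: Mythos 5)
Your proof is correct, and the two directions split as follows: your argument for $N\subseteq M$ is essentially identical to the paper's (the same computation $a^{1+q}=\tfrac{1}{1+s^{1+q}}+\tfrac{1}{(1+s^{1+q})^2}$ followed by ${\rm Tr}(x^2)={\rm Tr}(x)$), but for $M\subseteq N$ you take a genuinely different route. The paper never proves $M\subseteq N$ directly: it counts $\#M=\tfrac{q^2-q}{2}$ via the fibers of the norm map, then shows the parametrization $s\mapsto s^q/(1+s^{1+q})$ is $2$-to-$1$ on $\F_{q^2}^{*}\setminus\mu_{q+1}$ (the two preimages being $s$ and $1/s^q$) to get $\#N=\tfrac{q^2-q}{2}$, and concludes $M=N$ from $N\subseteq M$ plus equal cardinalities. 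You instead construct an explicit preimage: reduce $\gamma(1+u)^2=u$ to the Artin--Schreier equation $w^2+w=\gamma^2$ (solvable exactly when ${\rm Tr}_{\F_q/\F_2}(\gamma)=0$), lift $u$ to $s_0$ with $s_0^{1+q}=u$ by norm surjectivity, and correct by the unique $\zeta\in\mu_{q+1}$ with $\zeta^q=c(1+u)/s_0^{q}$ -- the "subtle step" you flag is handled correctly since $x\mapsto x^q$ is $x\mapsto x^{-1}$ on $\mu_{q+1}$ and hence bijective. Your approach is more informative in that it exhibits an actual $s$ for each $c\in M$ and avoids any counting; the paper's approach buys the cardinality $\#M=\#N=\tfrac{q^2-q}{2}$ and the $2$-to-$1$ structure of the parametrization as by-products, which is mildly useful elsewhere but not needed for the proposition itself.
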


\begin{proof}
First, for any $a\in N$, we have $$a^{1+q}=\big(\frac{s^{q}}{1+s^{1+q}}\big)^{1+q}=\frac{s^{1+q}}{(1+s^{1+q})^2}=\frac{1}{(1+s^{1+q})^2}+\frac{1}{1+s^{1+q}}.$$ 
Meanwhile, we have $s^{1+q}\in\F_{q}$, since $(s^{1+q})^q=s^{1+q}$.
Based on this, 
$${\rm Tr}_{\F_{q}/\F_{2}}(a^{1+q})={\rm Tr}_{\F_{q}/\F_{2}}\Big(\frac{1}{(1+s^{1+q})^2}+\frac{1}{1+s^{1+q}}\Big)=0.$$
Thus $a\in M$.
Therefore, $N\subseteq M$.
	
Next, the number of $c^{1+q}$ satisfying the condition ${\rm Tr}_{\F_{q}/\F_{2}}(c^{1+q})=0$ is $2^{m-1}=\displaystyle\frac{q}{2}$.
Moreover, for any $b\in\F_{q}^{*}$, the equation $c^{1+q}=b$ has $1+q$ solutions in $\F_{q^2}$ indeed. 
Hence, $\#M=\displaystyle(\frac{q}{2}-1)(q+1)+1=\frac{q^2-q}{2}$.
	
Then, we will show that $\#N=\displaystyle\frac{q^2-q}{2}$.
Before this, we establish that the mapping $$x\mapsto\frac{x^q}{1+x^{1+q}}$$ is 2-to-1 over $\F_{q^2}^{*}\backslash\mu_{q+1}$, where $\mu_{q+1}:=\{\delta\in\F_{q^2}\mid \delta^{1+q}=1\}$. It equals to prove that for any $s,t\in\F_{q^2}^{*}\backslash\mu_{q+1}$, the equation
\begin{equation}
\label{s,t neq 0}
\frac{s^q}{1+s^{1+q}}=\frac{t^q}{1+t^{1+q}}
\end{equation}only has two solutions, where one of them is $s=t$.
So we let $s\neq t$ in the sequel.
	
From Eq. (\ref{s,t neq 0}), we have $(s+t)^q=(s+t)(st)^q$. 
Due to the fact that $s+t\neq0$, we get
\begin{equation}
\label{s+t q-1}
(s+t)^{q-1}=(st)^q.
\end{equation}
Raising Eq. (\ref{s+t q-1}) into $(q+1)$-th power, we have
\begin{equation}
\label{q+1=1}
(st)^{1+q}=1.
\end{equation}
Combing Eq. (\ref{s,t neq 0}) and Eq. (\ref{q+1=1}), we obtain $$\displaystyle\frac{s^q}{1+s^{1+q}}=\frac{t^q}{1+t^{1+q}}=\frac{t^q}{(st)^{1+q}+t^{1+q}}=\frac{\frac{1}{t}}{1+s^{1+q}},$$
which means $t=\displaystyle\frac{1}{s^q}$.
Plugging $t=\displaystyle\frac{1}{s^q}$ into Eq. (\ref{s,t neq 0}), we find exactly that $$\displaystyle\frac{t^q}{1+t^{1+q}}=\frac{(\frac{1}{s^q})^q}{1+(\frac{1}{s^q})^{1+q}}=\frac{s^q}{1+s^{1+q}}.$$
Therefore, Eq. (\ref{s,t neq 0}) only has two solutions $s=t$ and $t=\displaystyle\frac{1}{s^q}$.
Moreover, $s=t$ and $t=\displaystyle\frac{1}{s^q}$ cannot coincide, since $s^{1+q}\neq1$.
	
Furthermore, the number of $s\in\F_{q^2}^{*}$ satisfying $s^{1+q}\neq1$ is $(q-2)(q+1)$.
Hence, $\#N=\displaystyle\frac{(q-2)(q+1)}{2}+1=\frac{q^2-q}{2}$.
	
That is to say, $\#M=\#N$.
Thus we have $M=N$. The proof is finished.
\end{proof}

Also note that in \cite{Q}, L. Qu proposed the following open problem.
\begin{prob}
\label{problem}
\cite[Problem 27]{Q}	
Set $q=2^m$. Let $$F(x)=\sum_{i=0}^{m-1}c_{i}x^{2^{m+i}+2^i}\in\F_{q^2}[x].$$
Is it true that $F$ is planar over $\F_{q^2}$ if and only if ${\rm Tr}_{\F_{2^m}/\F_{2}}(c_{0}^{q+1})=0$ and $c_{1}=c_{2}=\cdots=c_{m-1}=0$; or to find a counter-example?
\end{prob}
Clearly, (1) of Theorem \ref{main} answers partially the above problem for the binomial case. 

\section{Planar functions over $\F_{q^3}$}
In this section, we completely determine the asymptotic conditions (which means that m is sufficiently large) on $a,b,c\in\F_{q^3}$ such that two classes of functions over $\F_{q^3}$ with the forms $P_{2}(x)=ax^{q+1}+bx^{q^2+q}+cx^{q^2+1}$ and $P_{3}(x)=ax^{2(q+1)}+bx^{2(q^2+q)}+cx^{2(q^2+1)}$ are planar functions respectively.

First, we give the proof of (2) of Theorem \ref{main}.\\
\textit{Proof of (2) of Theorem \ref{main}.}
According to Lemma \ref{t=3}, we know that $P_{2}(x)$ is a planar function if and only if $g(x)=0$ has no solutions in $\F_{q^3}^{*}$, where
$$g(x)=x^{q^2+q+1}+{\rm Tr}_{\F_{q^3}/\F_{q}}\Big(x^q\Big(cx+(ax)^{q^2}+(bx)^q\Big)^2\Big).$$
Let
\begin{eqnarray}
\label{yuanshi-a,b,c}
G(X,Y,T) &=& b^2X^3+b^{2q}Y^3+b^{2q^2}T^3+c^2X^2Y+c^{2q}Y^2T+c^{2q^2}XT^2+a^2X^2T+a^{2q}XY^2\nonumber\\
& & +~a^{2q^2}YT^2+XYT.
\end{eqnarray}
Then $G(\varepsilon,\varepsilon^q,\varepsilon^{q^2})\neq0$ for any $ \varepsilon\in\F_{q^3}^{*}$ is exactly equivalent to that $g(x)=0$ has no solutions in $\F_{q^3}^{*}$, i.e., $P_{2}(x)$ is a planar function.

If $G(X,Y,T)$ is absolutely irreducible, we know that $P_{2}(x)$ is not planar according to Lemma \ref{nonzero}. 
Hence it suffices to consider only when $G(X,Y,T)$ is not absolutely irreducible.

Thus in the following, we suppose that $\mathcal{C}_{a,b,c}$ defined by $G(X,Y,T)$ is not absolutely irreducible, which holds if and only if it contains a line $l:AX+BY+CT=0$, where $A,B,C\in\overline{\F}_{q}$.
It is readily seen from (\ref{yuanshi-a,b,c}) that $G(\varepsilon,\varepsilon^q,\varepsilon^{q^2})=G^q(\varepsilon,\varepsilon^q,\varepsilon^{q^2})$ for any $ \varepsilon\in\F_{q^3}^{*}$.
It follows that in this case the line $l^{'}:A^qY+B^q T+C^q X=0$ is also a component of $\mathcal{C}_{a,b,c}$.
	
\textbf{Case 1:} If two among $A,B$ and $C$ are zero, which is trivial.
W.l.o.g., assume $B=C=0$ and $A=1$.
At this time $G(X,Y,T)$ can only be written as $G(X,Y,T)=XYT$, which means $a=b=c=0$.
	
\textbf{Case 2:} If one among $A,B$ and $C$ is zero.
W.l.o.g., we assume $C=0$.
Therefore, we can suppose that the line $l$ has equation $X+\alpha Y=0$ for some $\alpha\in\overline{\F}_{q}^{*}$.
Obviously, the two lines $l$ and $l^{'}:Y+\alpha^q T=0$ coincide impossibly in this case.
	
Thus
\begin{equation}
\label{a,b,c-1}
G(X,Y,T)=(X+\alpha Y)(Y+\alpha^q T)(T+\alpha^{q^2} X).
\end{equation}
Expanding (\ref{a,b,c-1}), we have
\begin{eqnarray}
\label{a,b,c-expanding}
G(X,Y,T)&=&\alpha^{q^2}X^2Y+\alpha^{1+q}YT^2+\alpha Y^2T+\alpha^{q+q^2}X^2T+\alpha^q XT^2+\alpha^{1+q^2}XY^2\nonumber\\
& & +~(1+\alpha^{1+q+q^2})XYT.
\end{eqnarray}
Comparing the coefficients of (\ref{yuanshi-a,b,c}) and (\ref{a,b,c-expanding}), we find that
$$(a^2,b^2,c^2)=\Bigg(\frac{\alpha^{q+q^2}}{1+\alpha^{1+q+q^2}},0,\frac{\alpha^{q^2}}{1+\alpha^{1+q+q^2}}\Bigg),$$
where $1+\alpha^{1+q+q^2}\neq0$.
Let $\alpha=s^2$.
Then we get
$$(a,b,c)=\Bigg(\frac{s^{q+q^2}}{1+s^{1+q+q^2}},0,\frac{s^{q^2}}{1+s^{1+q+q^2}}\Bigg),$$
where $1+s^{1+q+q^2}\neq0$.
It follows from (\ref{a,b,c-1}) that for any $\varepsilon\in\F_{q^3}^{*}$, $$G(X,Y,T)=(\varepsilon+\alpha\varepsilon^q)^{1+q+q^2}.$$ 
The matrix
\begin{displaymath}
M=\left( \begin{array}{ccc}
1 & \alpha & 0 \\
0 & 1 & \alpha^q \\
\alpha^{q^2} & 0 & 1  
\end{array} \right)
\end{displaymath}
has determinant $1+\alpha^{1+q+q^2}$, which does not equal to 0. 
From Lemma \ref{Dickson}, there does not exist $\varepsilon\in\F_{q^3}^{*}$ such that $\varepsilon+\alpha\varepsilon^q=0$, that is to say, $G(\varepsilon,\varepsilon^q,\varepsilon^{q^2})\neq0$ for any $\varepsilon\in\F_{q^3}^{*}$.
Thus $P_{2}(x)$ is planar over $\F_{q^3}$.
		
\textbf{Case 3:} If none of $A,B$ and $C$ is zero.
Here we suppose that the line $l$ has equation $X+\alpha Y+\beta T=0$ for some $\alpha,\beta\in\overline{\F}_{q}^{*}$.
	
The given two lines $l$ and $l^{'}:Y+\alpha^q T+\beta^q X=0$ coincide if and only if 
\begin{displaymath}
\left\{ \begin{array}{l}
\beta=\alpha^{q+1} \\
\alpha\beta^q=1
\end{array} \right.
\iff\left\{ \begin{array}{l}
\beta=\alpha^{q+1} \\
\alpha^{q^2+q+1}=1.
\end{array} \right.
\end{displaymath}
Hence $\alpha,\beta\in\F_{q^3}^{*}$.
	
If the two lines $l$ and $l^{'}$ coincide, the determinant of the matrix
\begin{displaymath}
M=\left( \begin{array}{ccc}
1 & \alpha & \beta \\
\beta^q & 1 & \alpha^q \\
\alpha^{q^2} & \beta^{q^2} & 1  
\end{array} \right)
\end{displaymath}
vanishes.
By Lemma \ref{Dickson}, there exists some $\varepsilon\in \F_{q^3}^{*}$ such that  $\varepsilon+\alpha \varepsilon^q+\beta \varepsilon^{q^2}=0$. Then $$G(\varepsilon,\varepsilon^q,\varepsilon^{q^2}) =G_{1}(\varepsilon,\varepsilon^q,\varepsilon^{q^2})(\varepsilon+\alpha \varepsilon^q+\beta \varepsilon^{q^2})=0,$$ where $G_{1}(X,Y,T)$ is another factor of $G(X,Y,T)$ with $\deg(G_{1})=2$.
Thus $P_{2}(x)$ is not planar in this subcase.
	
If the two lines $l$ and $l^{'}$ do not coincide, then we have  
\begin{equation}
\label{a,b,c-2}
G(X,Y,T)=(X+\alpha Y+\beta T)(Y+\alpha^{q} T+\beta^{q}X)(T+\alpha^{q^2} X+\beta^{q^2}Y),
\end{equation}
where $\beta\neq\alpha^{q+1}$ or $\alpha^{q^2+q+1}\neq1$, $\alpha,\beta\in\overline{\F}_{q}^{*}$.
Expanding (\ref{a,b,c-2}), we get
\begin{eqnarray}
\label{expanding-2}
G(X,Y,T) &=& \alpha^{q^2}\beta^qX^3+\alpha\beta^{q^2}Y^3+\alpha^{q}\beta T^3\nonumber\\
& & +~(\alpha^q\beta^{1+q^2}+\beta+\alpha^{1+q})YT^2+(\alpha+\beta^{1+q^2}+\alpha^{1+q}\beta^{q^2})Y^2T\nonumber\\
& & +~ (\alpha^{q^2}\beta^{1+q}+\beta^q+\alpha^{q+q^2})X^2T+(\alpha^q+\beta^{1+q}+\alpha^{q+q^2}\beta)XT^2\nonumber\\
& & +~ (\alpha\beta^{q+q^2}+\beta^{q^2}+\alpha^{1+q^2})XY^2+(\beta^{q+q^2}+\alpha^{q^2}+\alpha^{1+q^2}\beta^q)X^2Y\nonumber\\
& & +~(1+\alpha\beta^q+\alpha^{q}\beta^{q^2}+\alpha^{q^2}\beta+\alpha^{1+q+q^2}+\beta^{1+q+q^2})XYT.
\end{eqnarray}
Comparing the coefficients of (\ref{yuanshi-a,b,c}) and (\ref{expanding-2}), we find that
$$(a^2,b^2,c^2)=\Bigg(\frac{\beta^q+\alpha^{q+q^2}+\alpha^{q^2}\beta^{1+q}}{1+\delta},\frac{\alpha^{q^2}\beta^{q}}{1+\delta},\frac{\beta^{q+q^2}+\alpha^{q^2}+\alpha^{1+q^2}\beta^{q}}{1+\delta}\Bigg),$$
where $\delta=\alpha\beta^q+\alpha^{q}\beta^{q^2}+\alpha^{q^2}\beta+\alpha^{1+q+q^2}+\beta^{1+q+q^2}\neq1$.
Meanwhile, the condition $\delta\neq1$ guarantees that $\beta=\alpha^{q+1}$ and $\alpha^{q^2+q+1}=1$ cannot hold simultaneously.
That is to say, any two factors of $G(X,Y,T)$ in (\ref{a,b,c-2}) do not coincide if $\delta\neq1$.
Furthermore, let $\alpha=u^2$ and $\beta=v^2$.
We get
$$(a,b,c)=\Bigg(\frac{v^q+u^{q+q^2}+u^{q^2}v^{1+q}}{1+\Delta},\frac{u^{q^2}v^{q}}{1+\Delta},\frac{v^{q+q^2}+u^{q^2}+u^{1+q^2}v^{q}}{1+\Delta}\Bigg),$$
where $\Delta=uv^q+u^{q}v^{q^2}+u^{q^2}v+u^{1+q+q^2}+v^{1+q+q^2}\neq1$.
	
It follows from (\ref{a,b,c-2}) that for any $\varepsilon\in\F_{q^3}^{*}$, $$G(X,Y,T)=(\varepsilon+\alpha\varepsilon^q+\beta\varepsilon^{q^2})^{1+q+q^2}.$$ 
The matrix
\begin{displaymath}
M=\left( \begin{array}{ccc}
1 & \alpha & \beta \\
\beta^q & 1 & \alpha^q \\
\alpha^{q^2} & \beta^{q^2} & 1  
\end{array} \right)
\end{displaymath}
has determinant $1+\alpha\beta^q+\alpha^{q}\beta^{q^2}+\alpha^{q^2}\beta+\alpha^{1+q+q^2}+\beta^{1+q+q^2}$, which does not equal 0. 
From Lemma \ref{Dickson}, there does not exist $\varepsilon\in\F_{q^3}^{*}$ such that $\varepsilon+\alpha\varepsilon^q+\beta\varepsilon^{q^2}=0$, that is to say, $G(\varepsilon,\varepsilon^q,\varepsilon^{q^2})\neq0$ for any $\varepsilon\in\F_{q^3}^{*}$.
Therefore, $P_{2}(x)$ is planar over $\F_{q^3}$.
	
To sum up, adding the three cases together, we finish the proof.\hfill$\square$

\begin{rem}
\emph{In \cite{Bartoli}, the authors determined the conditions on $a,b\in\F_{q^3}$ such that $f_{a,b}(x)=ax^{q^2+1}+bx^{q+1}$ is planar over $\F_{q^3}$, which is clearly included by (2) of Theorem \ref{main} above. Moreover, our proof seems to be much simpler and the reason is that we adopt the method of L. Qu, simplifying the conditions greatly such that $P_{2}(x)$ is planar. }
\end{rem}

Next we consider another function with the type $P_{3}(x)=ax^{2(q+1)}+bx^{2(q^2+q)}+cx^{2(q^2+1)}\in\F_{q^3}[x]$ and give the proof of (3) of Theorem \ref{main}.\\
\textit{Proof of (3) of Theorem \ref{main}.}
According to Lemma \ref{t=3}, we know that $P_{3}(x)$ is a planar function if and only if $g(x)=0$ has no solutions in $\F_{q^3}^{*}$, where
$$
g(x)=x^{q^2+q+1}+{\rm Tr}_{\F_{q^3}/\F_{q}}\Big(x^q\Big(cx+(a\varepsilon)^{q^2}+(bx)^{q}\Big)\Big).
$$
Let 
\begin{equation}
\label{curve-2(q+1)}
G(X,Y,T)=(c+a^q)XY+(c^q+a^{q^2})YT+(c^{q^2}+a)XT+bX^2+b^qY^2+b^{q^2}T^2+XYT.
\end{equation}
Then $G(\varepsilon,\varepsilon^q,\varepsilon^{q^2})\neq0$ for any $\varepsilon\in\F_{q^3}^{*}$ is exactly equivalent to that $g(x)=0$ has no solutions in $\F_{q^3}^{*}$, i.e., $P_{3}(x)$ is a planar function.
	
If $G(X,Y,T)$ is absolutely irreducible, we know that $P_{3}(x)$ is not planar according to Lemma \ref{nonzero}. 
Hence it suffices to consider only when $G(X,Y,T)$ is not absolutely irreducible.
	
So the key work now is to determine the conditions of $a,b,c\in\F_{q^3}$ such that $\mathcal{C}_{a,b,c}$ defined by $G(X,Y,T)$ is not absolutely irreducible. 
There are two cases.
	
\textbf{Case 1:} $G(X,Y,T)=G_{1}G_{2}$, where $\deg(G_{1})=1$ and $\deg(G_{2})=2$ with $G_{2}$ being absolutely irreducible.
In this case, with the fact that  $G(\varepsilon,\varepsilon^q,\varepsilon^{q^2})=G^q(\varepsilon,\varepsilon^q,\varepsilon^{q^2})$ for any $\varepsilon\in\F_{q^3}^{*}$, we find that $G_{1}$ and $G_{1}^q$ must coincide, and the same with $G_{2}$ and $G_{2}^q$. 
Otherwise, $G(X,Y,T)=G_{1}G_{1}^qG_{1}^{q^2}G_{2}G_{2}^qG_{2}^{q^2}$, whereas the degree of $G(X,Y,T)$ is 9 other than 3, which is a contradiction.
Thus, the terms $X,Y,T$ must appear in the expression of $G_{1}$ simultaneously.
Furthermore, the three terms $X^2,Y^2,T^2$ must appear in the expression of $G_{2}$ simultaneously or not. 
The same analyses are with the terms $XY,YT,TX$ and $X,Y,T$.
	
However, it is clear that there are not the terms $X^3$ or $X^2Y$ in the expression of (\ref{curve-2(q+1)}).
Hence, $X^2,Y^2,T^2,XY,YT,TX$ cannot appear in the expression of $G_{2}$.
This indicates that the degree of $G_{2}$ cannot be 2, which contradicts to the original assumption.
Namely, this kind of decomposition of $G(X,Y,T)$ is impossible.
	
\textbf{Case 2:} $G(X,Y,T)=G_{1}G_{2}G_{3}$, where $\deg(G_{i})=1, i\in\{1,2,3\}$.
In this case, in view of that the constant term of $G(X,Y,T)$ is zero, then at least one constant term of $G_{i}$ is zero. 
W.l.o.g., let the constant term of $G_{1}$ be zero.
Assume the line $l$ corresponding to $G_{1}$ be $AX+BY+CT=0$, where $A,B,C\in\overline{\F}_{q}$.
Since  $G(\varepsilon,\varepsilon^q,\varepsilon^{q^2})=G^q(\varepsilon,\varepsilon^q,\varepsilon^{q^2})$ for any $\varepsilon\in\F_{q^3}^{*}$, then the line $l^{'}:A^qY+B^qT+C^qX=0$ is also a component of $\mathcal{C}_{a,b,c}$.
	
If the two lines $l$ and $l^{'}$ coincide.
Then none of $A,B$ and $C$ is zero.
Since if more than one among $A,B$ and $C$ are zero, the two lines coincide impossibly.
Assume now that the line $l$ is with the form $X+\alpha Y+\beta T=0$ for some $\alpha,\beta\in\overline{\F}_{q}^{*}$.
The two lines $l:X+\alpha Y+\beta T=0$ and $l^{'}:Y+\alpha^q T+\beta^q X=0$ coincide if and only if
\begin{displaymath}
\left\{ \begin{array}{l}
\beta=\alpha^{q+1} \\
\alpha\beta^q=1
\end{array} \right.
\iff\left\{ \begin{array}{l}
\beta=\alpha^{q+1} \\
\alpha^{q^2+q+1}=1.
\end{array} \right.
\end{displaymath}
Hence $\alpha,\beta\in\F_{q^3}^{*}$.
In this case, the determinant of the matrix
\begin{displaymath}
M=\left( \begin{array}{ccc}
1 & \alpha & \beta \\
\beta^q & 1 & \alpha^q \\
\alpha^{q^2} & \beta^{q^2} & 1  
\end{array} \right)
\end{displaymath}
vanishes.
By Lemma \ref{Dickson}, there exists some $\varepsilon\in \F_{q^3}^{*}$ such that  $\varepsilon+\alpha \varepsilon^q+\beta \varepsilon^{q^2}=0$. 
Then $$G(\varepsilon,\varepsilon^q,\varepsilon^{q^2}) =(\varepsilon+\alpha \varepsilon^q+\beta \varepsilon^{q^2})G_{2}(\varepsilon,\varepsilon^q,\varepsilon^{q^2})G_{3}(\varepsilon,\varepsilon^q,\varepsilon^{q^2})=0,$$ which means that $P_{3}(x)$ is not planar in this subcase.
	
If the two lines $l$ and $l^{'}$ do not coincide, then we have 
\begin{equation}
\label{ABC-1}
G(X,Y,T)=(AX+BY+CT)(A^qY+B^{q} T+C^{q}X)(A^{q^2}T+B^{q^2} X+C^{q^2}Y).
\end{equation}
It is clear that there are not the terms $X^3$ or $X^2Y$ in the expression of (\ref{curve-2(q+1)}).
Hence, two among $A,B$ and $C$ are zero.
For simplicity, set $B=C=0$ and $A=1$.
At this time, (\ref{ABC-1}) reads $G(X,Y,T)=XYT$.
Obviously, $P_{3}(x)$ is planar at this time.
	
In conclusion, we know that $P_{3}(x)$ is a planar function if and only if $G(X,Y,T)$ can only be factorized as $G(X,Y,T)=XYT$, which means that the conditions $c+a^q=0$ and $b=0$ must hold simultaneously.
Hence, the proof is finished.\hfill$\square$

\begin{rem}
\emph{Actually, (3) of Theorem \ref{main} is an extension work of \cite[Theorem 16]{Q}.
They gave the sufficient condition for $P_{3}(x)$ to be planar, and here we prove that it is also necessary indeed by the knowledge of algebraic geometry over finite fields.}
\end{rem}

\section{Planar functions over $\F_{q^4}$}
In this section, we investigate one more class of functions with the type  $P_{4}(x)=ax^{q+1}+bx^{q^2+1}+cx^{q^3+1}\in\F_{q^4}[x]$.
Then we give complete characterizations of the values of corresponding coefficients $a,b,c\in\F_{q^4}$ such that $P_{4}(x)$ is planar over $\F_{q^4}$ and give the proof of (4) of Theorem \ref{main}.\\
\textit{Proof of (4) of Theorem \ref{main}.}
According to Lemma \ref{t=4}, we know that $P_{4}(x)$ is a planar function if and only if $g(x)=0$ has no solutions in $\F_{q^4}^{*}$, where
$$
g(x)=x^{q^3+q^2+q+1}+A_{2}^{2q+2}+(A_{3}^{2q^2+2}+A_{3}^{2q^3+2q})+(x^{q^2+1}A_{2}^{2q}+x^{q^3+q}A_{2}^2)+{\rm Tr}_{\F_{q^4}/\F_{q}}\Big(x^{q^2+q}A_{3}^2\Big),$$
with $A_{2}=bx+(bx)^{q^2}$ and $A_{3}=cx+(ax)^{q^3}$.
	
Let
\begin{eqnarray}
\label{curve-t=4}
G(X,Y,T,S) &=& XYTS+(b^{2q+2}+a^{2q}c^2)X^2Y^2+(b^{2q^3+2}+a^2c^{2q^3})X^2S^2\nonumber\\
& & +~(b^{2q+2q^2}+a^{2q^2}c^{2q})Y^2T^2+ (b^{2q^2+2q^3}+a^{2q^3}c^{2q^2})T^2S^2\nonumber\\
& & +~(c^{2q^2+2}+a^{2q^2+2})X^2T^2+(c^{2q+2q^3}+a^{2q+2q^3})Y^2S^2+b^2X^2YS\nonumber\\
& & +~ b^{2q}XY^2T+b^{2q^2}YT^2S+b^{2q^3}XTS^2+c^2X^2YT+c^{2q}Y^2TS+c^{2q^2}XT^2S\nonumber\\
& & +~c^{2q^3}XYS^2+a^{2}X^2TS+a^{2q}XY^2S+a^{2q^2}XYT^2+a^{2q^3}YTS^2.
\end{eqnarray}
Then $G(\varepsilon,\varepsilon^q,\varepsilon^{q^2},\varepsilon^{q^3})\neq0$ for any $ \varepsilon\in\F_{q^4}^{*}$ is exactly equivalent to that $g(x) = 0$ has no solutions in $\F_{q^4}^{*}$, i.e., $P_{4}(x)$ is a planar function.
	
If $G(X,Y,T,S)$ is absolutely irreducible, we know that $P_{4}(x)$ is not planar according to Lemma \ref{nonzero}. 
Hence it suffices to consider only when $G(X,Y,T,S)$ is not absolutely irreducible.
	
Now we assume that $\mathcal{C}_{a,b,c}$ defined by $G(X,Y,T,S)$ is not absolutely irreducible and determine how it decomposes.
For convenience, if $G(X,Y,T,S)$ can be factorized as a product of four linear factors, we write $G(X,Y,T,S)=(1,1,1,1)$; if $G(X,Y,T,S)$ can be factorized as a product of two quadratic absolutely irreducible factors, we write $G(X,Y,T,S)=(2,2)$.
	
\textbf{Case 1:} $G(X,Y,T,S)=(2,2)$, that is $G(X,Y,T,S)=G_{1}G_{2}$, where $\deg(G_{1})=\deg(G_{2})=2$ and both $G_{1}$ and $G_{2}$ are absolutely irreducible in this case.
Since $G(\varepsilon,\varepsilon^q,\varepsilon^{q^2},\varepsilon^{q^3})=G^q(\varepsilon,\varepsilon^q,\varepsilon^{q^2},\varepsilon^{q^3})$ for any $ \varepsilon\in\F_{q^4}^{*}$, it follows that $G_{1}^q$ is also a factor of $G(X,Y,T,S)$.
There are two subcases at this time.
	
\textbf{Subcase 1.1:} If $G_{1}$ and $G_{1}^q$ coincide, which means $G_{1}^q=\lambda G_{1}$ for some $\lambda\in\F_{q^4}^{*}$. 
Moreover, $G_{2}$ and $G_{2}^q$ also coincide.
It is readily seen that the four terms $XY,YT,TS,SX$ must appear in the expressions of $G_{1}$ and $G_{2}$ simultaneously, and the same with the terms $X^2,Y^2,T^2,S^2$ and $XT,YS$.
In view of the nonexistence of the terms $X^4$ or $X^3Y$ in the expression of (\ref{curve-t=4}), we know that $X^2,Y^2,T^2,S^2$ cannot appear in the expressions of $G_{1}$ and $G_{2}$.
	
Considering the most general subcase, we assume that $G_{1}=XY+a_{1}YT+a_{2}TS+a_{3}SX+a_{4}XT+a_{5}YS$, where $a_{i}\in\F_{q^4}^{*}$.
Then, we have $G_{1}^q=YT+a_{1}^qTS+a_{2}^qSX+a_{3}^qXY+a_{4}^qYS+a_{5}^qTX$.
The condition that $G_{1}$ and $G_{1}^q$ coincide is equivalent to the following equation:
\begin{equation*}
\label{q^4-2.2-1}
\left\{ \begin{array}{l}
a_{2}=a_{1}^{1+q} \\
a_{3}=a_{1}a_{2}^q\\
a_{1}a_{3}^q=1\\
a_{1}a_{4}^q=a_{5}\\
a_{1}a_{5}^q=a_{4}
\end{array} \right.
\iff\left\{ \begin{array}{l}
a_{1}^{1+q+q^2+q^3}=1\\
a_{2}=a_{1}^{1+q} \\
a_{3}=a_{1}^{1+q+q^2}\\
a_{1}a_{4}^q=a_{5}\\
a_{1}a_{5}^q=a_{4}.
\end{array} \right.
\end{equation*}
	
Let $\{\xi,\xi^q,\xi^{q^2},\xi^{q^3}\}$ be a normal basis of $\F_{q^4}$ over $\F_{q}$.
Consider $X=x_{0}\xi+x_{1}\xi^q+x_{2}\xi^{q^2}+x_{3}\xi^{q^3}$, $Y=x_{3}\xi+x_{0}\xi^q+x_{1}\xi^{q^2}+x_{2}\xi^{q^3}$, $T=x_{2}\xi+x_{3}\xi^q+x_{0}\xi^{q^2}+x_{1}\xi^{q^3}$ and $S=x_{1}\xi+x_{2}\xi^q+x_{3}\xi^{q^2}+x_{0}\xi^{q^3}$ with $x_{i}\in\F_{q}$, $i=\{1,2,3,4\}$.
Expanding
\begin{eqnarray*}
\psi_{1}(x_{0},x_{1},x_{2},x_{3}) &=& G_{1}(x_{0}\xi+x_{1}\xi^q+x_{2}\xi^{q^2}+x_{3}\xi^{q^3},x_{3}\xi+x_{0}\xi^q+x_{1}\xi^{q^2}+x_{2}\xi^{q^3},\nonumber\\
& &~~~~ x_{2}\xi+x_{3}\xi^q+x_{0}\xi^{q^2}+x_{1}\xi^{q^3},x_{1}\xi+x_{2}\xi^q+x_{3}\xi^{q^2}+x_{0}\xi^{q^3}),
\end{eqnarray*}
we find that the coefficient of $x_{0}^2$ in the expression of $\psi_{1}(x_{0},x_{1},x_{2},x_{3})$ is $$\xi^{1+q}+a_{1}\xi^{q+q^2}+a_{2}\xi^{q^2+q^3}+a_{3}\xi^{1+q^3}+a_{4}\xi^{1+q^2}+a_{5}\xi^{q+q^3}.$$
Furthermore, we have that
\begin{eqnarray*}
& & \xi^{1+q}+a_{1}\xi^{q+q^2}+a_{2}\xi^{q^2+q^3}+a_{3}\xi^{1+q^3}+a_{4}\xi^{1+q^2}+a_{5}\xi^{q+q^3}\nonumber\\
&=& a_{1}^{1+q+q^2+q^3}\xi^{1+q}+a_{1}\xi^{q+q^2}+a_{1}^{1+q}\xi^{q^2+q^3}+a_{1}^{1+q+q^2}\xi^{1+q^3}+a_{1}a_{5}^q\xi^{1+q^2}+a_{1}a_{4}^q\xi^{q+q^3}\nonumber\\
&=& a_{1}\Big(a_{1}^{q+q^2+q^3}\xi^{1+q}+\xi^{q+q^2}+a_{1}^{q}\xi^{q^2+q^3}+a_{1}^{q+q^2}\xi^{1+q^3}+a_{5}^q\xi^{1+q^2}+a_{4}^q\xi^{q+q^3}\Big)\nonumber\\
&=& a_{1}\Big(a_{1}^{1+q+q^2}\xi^{1+q^3}+\xi^{1+q}+a_{1}\xi^{q+q^2}+a_{1}^{1+q}\xi^{q^2+q^3}+a_{5}\xi^{q+q^3}+a_{4}\xi^{1+q^2}\Big)^q\nonumber\\
&=& a_{1}\Big(\xi^{1+q}+a_{1}\xi^{q+q^2}+a_{2}\xi^{q^2+q^3}+a_{3}\xi^{1+q^3}+a_{4}\xi^{1+q^2}+a_{5}\xi^{q+q^3}\Big)^q.
\end{eqnarray*}
It is readily seen that all the coefficients in the expression of $\psi_{1}(x_{0},x_{1},x_{2},x_{3})$ satisfy the relationship: $$c_{i}=a_{1}c_{i}^q,$$ where $c_{i}$ represents the corresponding coefficient respectively.
	
Meanwhile, the equation $t^{q-1}=a_{1}$ always has solutions in $\F_{q^4}^{*}$, since $a_{1}^{\frac{q^4-1}{\gcd(q^4-1,q-1)}}=a_{1}^{1+q+q^2+q^3}=1$.
Thus, from $c_{i}=a_{1}c_{i}^q=t^{q-1}c_{i}^q$, we get $tc_{i}=t^{q}c_{i}^q=(tc_{i})^q$.
That is, $tc_{i}\in\F_{q}$.
So the polynomial $t\psi_{1}(x_{0},x_{1},x_{2},x_{3})$ is defined over $\F_{q}$ and defines an absolutely irreducible algebraic surface. 
Then by Lemma \ref{nonzero}, there exists a nonzero element $\varepsilon\in\F_{q^4}^{*}$ such that $G_{1}(\varepsilon,\varepsilon^q,\varepsilon^{q^2},\varepsilon^{q^3})=0$, that is $G(\varepsilon,\varepsilon^q,\varepsilon^{q^2},\varepsilon^{q^3})=0$.
Thus $P_{4}(x)$ is not planar at this time.
	
\textbf{Subcase 1.2:} If $G_{1}$ and $G_{1}^q$ do not coincide. We must have $G_{1}$ and $G_{1}^{q^2}$ coincide, otherwise $G(X,Y,T,S)=G_{1}G_{1}^qG_{1}^{q^2}G_{1}^{q^3}$ and the degree of $G(X,Y,T,S)$ is 8 other than 4, which is a contradiction.
In this sense, $G(X,Y,T,S)$ can be written as $G(X,Y,T,S)=G_{1}G_{1}^q=G_{1}^{1+q}$.
Determining the solutions of $G(X,Y,T,S)=0$ is equivalent to determining the solutions of $G_{1}=0$.
	
Similar to subcase 1.1, as for $G_{1}$, let $\{\zeta,\zeta^{q^2}\}$ be a normal basis of $\F_{q^4}$ over $\F_{q^2}$.
Consider $a=a_{0}\zeta+a_{1}\zeta^{q^2}$, $b=b_{0}\zeta+b_{1}\zeta^{q^2}$, $c=c_{0}\zeta+c_{1}\zeta^{q^2}$ with $a_{i},b_{i},c_{i}\in\F_{q^2}$ and
$$\psi_{1}(x_{0},x_{1})=G_{1}(x_{0}\zeta+x_{1}\zeta^{q^2},x_{0}^q\zeta^q+x_{1}^q\zeta^{q^3},x_{0}\zeta^{q^2}+x_{1}\zeta,x_{0}^q\zeta^{q^3}+x_{1}^q\zeta^q).$$
The polynomial $\sigma\psi_{1}(x_{0},x_{1})$ is defined over $\F_{q^2}$ for some $\sigma\in\F_{q^4}^{*}$ and defines an absolutely irreducible algebraic surface.
Then by Lemma \ref{nonzero}, there exists a nonzero element $\varepsilon\in\F_{q^4}^{*}$ such that $G_{1}(\varepsilon,\varepsilon^q,\varepsilon^{q^2},\varepsilon^{q^3})=0$, that is $G(\varepsilon,\varepsilon^q,\varepsilon^{q^2},\varepsilon^{q^3})=0$.
Thus $P_{4}(x)$ is not planar at this time. 
	
\textbf{Case 2:} $G(X,Y,T,S)=(1,3)$.
Set $G_{1}$ be a factor of $G(X,Y,T,S)$ with $\deg(G_{1})=1$ in this case.
Furthermore, $G_{1}^q$ is also a factor of $G(X,Y,T,S)$, since $G(\varepsilon,\varepsilon^q,\varepsilon^{q^2},\varepsilon^{q^3})=G^q(\varepsilon,\varepsilon^q,\varepsilon^{q^2},\varepsilon^{q^3})$ for any $ \varepsilon\in\F_{q^4}^{*}$. 
We obtain that $G_{1}$ and $G_{1}^q$ coincide.
Here we assume that the surface $s$ corresponding to $G_{1}$ is $AX+BY+CT+DS=0$, where $A,B,C,D\in\overline{\F}_{q}$.
Meanwhile, the surface $s^{'}$ corresponding to $G_{1}^q$ is $A^qY+B^qT+C^qS+D^qX=0$.
It follows that if the two surfaces $s$ and $s^{'}$ coincide, then none of $A,B,C$ and $D$ is zero.
We can suppose now that $s$ is with the form $X+\alpha Y+\beta T+\gamma S=0$ for some $\alpha,\beta,\gamma\in\overline{\F}_{q}^{*}$.
Moreover, the two surfaces $s$ and $s^{'}$ coincide if and only if
\begin{equation*}
\label{kk}
\left\{ \begin{array}{l}
\beta=\alpha^{q+1} \\
\gamma=\alpha\beta^q\\
\alpha\gamma^q=1
\end{array} \right.
\iff\left\{ \begin{array}{l}
\beta=\alpha^{q+1} \\
\gamma=\alpha^{q^2+q+1}\\
\alpha^{q^3+q^2+q+1}=1.
\end{array} \right.
\end{equation*}
Hence $\alpha,\beta,\gamma\in\F_{q^4}^{*}$.
In this case, the determinant of the matrix
\begin{displaymath}
M=\left( \begin{array}{cccc}
1 & \alpha & \beta & \gamma \\
\gamma^q & 1 & \alpha^q & \beta^q \\
\beta^{q^2} & \gamma^{q^2} & 1 & \alpha^{q^2} \\  
\alpha^{q^3} & \beta^{q^3} & \gamma^{q^3} & 1
\end{array} \right)
\end{displaymath}
vanishes, since the first and second rows of the matrix are linearly dependent.
By Lemma \ref{Dickson}, there exists some $\varepsilon\in \F_{q^4}^{*}$ such that  $\varepsilon+\alpha \varepsilon^q+\beta \varepsilon^{q^2}+\gamma \varepsilon^{q^3}=0$. 
Then $$G(\varepsilon,\varepsilon^q,\varepsilon^{q^2},\varepsilon^{q^3}) =(\varepsilon+\alpha \varepsilon^q+\beta \varepsilon^{q^2}+\gamma \varepsilon^{q^3})G_{2}(\varepsilon,\varepsilon^q,\varepsilon^{q^2},\varepsilon^{q^3})=0,$$ where $G_{2}(X,Y,T,S)$ is another factor of $G(X,Y,T,S)$ with $\deg(G_{2})=3$.
Therefore, $P_{4}(x)$ is not planar at this time.
	
\textbf{Case 3:} $G(X,Y,T,S)=(1,1,2)$.
Assume that $G(X,Y,T,S)=G_{1}G_{2}G_{3}$, where $\deg(G_{1})=\deg(G_{2})=1$ and $\deg(G_{3})=2$ in this case.
As for $G_{1}$, we know that $G_{1}^q$ is also a factor of $G(X,Y,T,S)$.
Thus there are two subcases at this time.
	
\textbf{Subcase 3.1:} If $G_{1}$ and $G_{1}^q$ coincide. This subcase is similar to Case 2, and $P_{4}(x)$ is not planar at this time.
	
\textbf{Subcase 3.2:} If $G_{1}$ and $G_{1}^q$ do not coincide.
Then we have $G_{2}=G_{1}^{q}$, i.e., $G(X,Y,T,S)=G_{1}G_{1}^qG_{3}=G_{1}^{1+q}G_{3}$.
Meanwhile, $G_{1}$ and $G_{1}^{q^2}$ must coincide, since $G_{1}^{q^2}$ is also a component of $G(X,Y,T,S)$.
Here we also assume that the surface $s$ corresponding to $G_{1}$ is $AX+BY+CT+DS=0$, where $A,B,C,D\in\overline{\F}_{q}$.
In this case the surface $s^{''}$ corresponding to $G_{1}^{q^2}$ is $A^{q^2}T+B^{q^2}S+C^{q^2}X+D^{q^2}Y=0$.
It follows that if the two surfaces $s$ and $s^{''}$ coincide, the corresponding coefficients are in proportion.
In this case, the determinant of the matrix
\begin{displaymath}
M=\left( \begin{array}{cccc}
A & B & C & D \\
D^q & A^q & B^q & C^q \\
C^{q^2} & D^{q^2} & A^{q^2} & B^{q^2} \\  
B^{q^3} & C^{q^3} & D^{q^3} & A^{q^3}
\end{array} \right)
\end{displaymath}
vanishes, since the first and third rows of the matrix are linearly dependent.
By Lemma \ref{Dickson}, there exists some $\varepsilon\in \F_{q^4}^{*}$ such that  $A\varepsilon+B\varepsilon^q+C\varepsilon^{q^2}+D\varepsilon^{q^3}=0$. 
Then $$G(\varepsilon,\varepsilon^q,\varepsilon^{q^2},\varepsilon^{q^3}) =(A\varepsilon+B\varepsilon^q+C\varepsilon^{q^2}+D\varepsilon^{q^3})^{1+q}G_{3}(\varepsilon,\varepsilon^q,\varepsilon^{q^2},\varepsilon^{q^3})=0,$$
which means that $P_{4}(x)$ is not planar at this time.
	
\textbf{Case 4:} $G(X,Y,T,S)=(1,1,1,1)$.
Set $G_{1}$ be a factor of $G(X,Y,T,S)$ with $\deg(G_{1})=1$ in this case.
Assume that the surfaces $s,s^{'}$ and $s^{''}$ correspond $G_{1},G_{1}^q$ and $G_{1}^{q^2}$ respectively.
From the analysis above, we know that $P_{4}(x)$ can become planar if and only if any two of the three components $s,s^{'}$ and $s^{''}$ of $\mathcal{C}_{a,b,c}$ do not coincide.
At this time, $G(X,Y,T,S)$ can only decompose as $G(X,Y,T,S)=G_{1}G_{1}^qG_{1}^{q^2}G_{1}^{q^3}=G_{1}^{1+q+q^2+q^3}$.
Assume that the surface $s$ corresponding to $G_{1}$ is $AX+BY+CT+DS=0$, where $A,B,C,D\in\overline{\F}_{q}$.
In view of the nonexistence of the terms $X^4$ or $X^3Y$ in the expression of (\ref{curve-t=4}), it follows that two among $A,B,C$ and $D$ are zero.
Hence, there only exist two possible factorizations of $G(X,Y,T,S)$.
	
\textbf{Subcase 4.1:} 
\begin{equation}
\label{factorization-1}
G(X,Y,T,S)=(X+\theta_{1}T)(Y+\theta_{1}^qS)(T+\theta_{1}^{q^2}X)(S+\theta_{1}^{q^3}Y),
\end{equation}
where $\theta_{1}^{q^2+1}\neq1$.
Otherwise, the two surfaces $X+\theta_{1}T=0$ and $T+\theta_{1}^{q^2}X=0$ or the other two surfaces $Y+\theta_{1}^qS=0$ and $S+\theta_{1}^{q^3}Y=0$ would coincide indeed, which indicates that $P_{4}(x)$ is not planar.
Expanding (\ref{factorization-1}), we have
\begin{eqnarray}
\label{ex-1}
G(X,Y,T,S) &=& (1+\theta_{1}^{1+q^2})(1+\theta_{1}^{q+q^3})XYTS+\theta_{1}^{q^2+q^3}X^2Y^2+\theta_{1}^{1+q^3}Y^2T^2+\theta_{1}^{1+q}T^2S^2\nonumber\\
& & +~\theta_{1}^{q+q^2}X^2S^2+ \theta_{1}^{q^3}(1+\theta_{1}^{1+q^2})XY^2T+\theta_{1}(1+\theta_{1}^{q+q^3})YT^2S\nonumber\\
& & +~\theta_{1}^{q}(1+\theta_{1}^{1+q^2})XTS^2 +\theta_{1}^{q^2}(1+\theta_{1}^{q+q^3})X^2YS.
\end{eqnarray}
Comparing the coefficients of (\ref{curve-t=4}) and (\ref{ex-1}), we find that
$$(a^2,b^2,c^2)=\Bigg(0,\frac{\theta_{1}^{q^2}}{1+\theta_{1}^{1+q^2}},0\Bigg),$$
where $\theta_{1}^{q^2+1}\neq1$.
Let $\theta_{1}=s_{1}^2$.
We have
$$(a,b,c)=\Bigg(0,\frac{s_{1}^{q^2}}{1+s_{1}^{1+q^2}},0\Bigg),$$
where $s_{1}^{q^2+1}\neq1$.
	
It follows from (\ref{factorization-1}) that for any $\varepsilon\in\F_{q^4}^{*}$, $$G(X,Y,T,S)=(\varepsilon+\theta_{1}\varepsilon^{q^2})^{1+q+q^2+q^3}.$$ 
The matrix
\begin{displaymath}
M=\left( \begin{array}{cccc}
1 & 0 & \theta_{1} & 0 \\
0 & 1 & 0 & \theta_{1}^q \\
\theta_{1}^{q^2} & 0 & 1 & 0 \\  
0 & \theta_{1}^{q^3} & 0 & 1
\end{array} \right)
\end{displaymath}
has determinant $(1+\theta_{1}^{1+q^2})(1+\theta_{1}^{q+q^3})=(1+\theta_{1}^{1+q^2})^{q+1}$, which does not equal 0. 
From Lemma \ref{Dickson}, there does not exist $\varepsilon\in\F_{q^4}^{*}$ such that $\varepsilon+\theta_{1}\varepsilon^{q^2}=0$, that is to say, $G(\varepsilon,\varepsilon^q,\varepsilon^{q^2},\varepsilon^{q^3})\neq0$ for any $\varepsilon\in\F_{q^4}^{*}$.
Hence, $P_{4}(x)$ is planar over $\F_{q^4}$.
	
\textbf{Subcase 4.2:} 
\begin{equation}
\label{factorization-2}
G(X,Y,T,S)=(X+\theta_{2}Y)(Y+\theta_{2}^qT)(T+\theta_{2}^{q^2}S)(S+\theta_{2}^{q^3}X).
\end{equation}
Expanding (\ref{factorization-2}), we have
\begin{eqnarray}
\label{ex-2}
G(X,Y,T,S) &=& (1+\theta_{2}^{1+q^2+q^2+q^3})XYTS+\theta_{2}Y^2TS+\theta_{2}^{q}XT^2S+\theta_{2}^{q^2}XYS^2+\theta_{2}^{q^3}X^2YT\nonumber\\
& & +~ \theta_{2}^{1+q}YT^2S+\theta_{2}^{q+q^2}XTS^2+\theta_{2}^{q^2+q^3}X^2YS+\theta_{2}^{1+q^3}XY^2T
\nonumber\\
& & +~ \theta_{2}^{1+q+q^2}YTS^2+\theta_{2}^{1+q+q^3}XYT^2+\theta_{2}^{1+q^2+q^3}XY^2S+\theta_{2}^{q+q^2+q^3}X^2TS\nonumber\\
& & +~\theta_{2}^{1+q^2}Y^2S^2+\theta_{2}^{q+q^3}X^2T^2.
\end{eqnarray}
Comparing the coefficients of (\ref{curve-t=4}) and (\ref{ex-2}), we find that
$$(a^2,b^2,c^2)=\Bigg(\frac{\theta_{2}^{q+q^2+q^3}}{1+\theta_{2}^{1+q+q^2+q^3}},\frac{\theta_{2}^{q^2+q^3}}{1+\theta_{2}^{1+q+q^2+q^3}},\frac{\theta_{2}^{q^3}}{1+\theta_{2}^{1+q+q^2+q^3}}\Bigg),$$
where $\theta_{2}^{1+q+q^2+q^3}\neq1$.
Let $\theta_{2}=s_{2}^2$.
We have
$$(a,b,c)=\Bigg(\frac{s_{2}^{q+q^2+q^3}}{1+s_{2}^{1+q+q^2+q^3}},\frac{s_{2}^{q^2+q^3}}{1+s_{2}^{1+q+q^2+q^3}},\frac{s_{2}^{q^3}}{1+s_{2}^{1+q+q^2+q^3}}\Bigg),$$
where $s_{2}^{1+q+q^2+q^3}\neq1$.
	
It follows from (\ref{factorization-2}) that for any $\varepsilon\in\F_{q^4}^{*}$, $$G(X,Y,T,S)=(\varepsilon+\theta_{2}\varepsilon^{q})^{1+q+q^2+q^3}.$$ 
The matrix
\begin{displaymath}
M=\left( \begin{array}{cccc}
1 & \theta_{2} & 0 & 0 \\
0 & 1 & \theta_{2}^q & 0 \\
0 & 0 & 1 & \theta_{2}^{q^2} \\  
\theta_{2}^{q^3} & 0 & 0 & 1
\end{array} \right)
\end{displaymath}
has determinant $1+\theta_{2}^{1+q^2+q^2+q^3}$, which does not equal 0. 
From Lemma \ref{Dickson}, there does not exist $\varepsilon\in\F_{q^4}^{*}$ such that $\varepsilon+\theta_{2}\varepsilon^{q}=0$, that is to say, $G(\varepsilon,\varepsilon^q,\varepsilon^{q^2},\varepsilon^{q^3})\neq0$ for any $\varepsilon\in\F_{q^4}^{*}$.
So $P_{4}(x)$ is planar over $\F_{q^4}$ indeed.
	
Thus the whole proof is completed.\hfill$\square$

\begin{rem}
\emph{The former case of (4) of Theorem \ref{main} is exactly the case of monomials $ax^{q+1}$ in $\F_{q^2}$, see Proposition \ref{M=N} for details.
In \cite{Q}, L. Qu considered two explicit classes of constructions of planar functions over $\F_{q^4}$ and gave sufficient conditions for them to be planar.
However, for the latter case of (4) of Theorem \ref{main}, it is the first time that we investigate the necessary and sufficient conditions for this class of functions to be planar.}
\end{rem}

\section{The equivalence between the derived semifields and corresponding fields}

In this section, we mainly focus on the equivalence between the semifields produced by the planar functions in Theorem \ref{main} and the corresponding fields.

Firstly, the planar functions in $\F_{q^2}$ cannot be new, since they are monomials exactly and have already been investigated by K.-U. Schmidt and Y. Zhou \cite{Schmidt} in 2014.

Next, the planar functions in $\F_{q^3}$ cannot be new either.
The reason is that they are all of Dembowski-Ostrom type, which means that the semifields’ centers must contain $\F_{q}$.
By the classification of semifields of order $q^3$ over $\F_{q}$ by Menichetti \cite{Menichetti} in 1977, they must be finite fields. Therefore, these functions should be equivalent to $F(x)=0$.

Finally, we discuss the planar functions over $\F_{q^4}$, which are $P_{4}(x)=ax^{q+1}+bx^{q^2+1}+cx^{q^3+1}\in\F_{q^4}[x]$ in (4) of Theorem \ref{main}.	
We have  tried to analyze in theory and done many experiments by Mathematica and Magma. Unfortunately, we conclude that these semifields are exactly fields to a large extent.

(1) In theory:

As for the proof in theory, the main difficulty is to construct the semifield $(\F_{q^4},+,\circ)$ and especially the multiplication $\circ$.
Towards this object, we need to calculate the  inverse $L^{-1}(x)$ of some linearized polynomial deduced by $L(x)=x*1$, where $*$ is the multiplication of the presemifield $(\F_{q^4},+,*)$ corresponding to $P_{4}(x)$ such that $x*y=xy+P_{4}(x+y)+P_{4}(x)+P_{4}(y)$.
Then we obtain the semifield multiplication $\circ$, that is $x\circ y=L^{-1}(x*y)$.
However, in general case the inverse $L^{-1}(x)$ has too many terms and is very complicated. Let $L^{-1}(x)=a_{1}x+a_{2}x^q+a_{3}x^{q^2}+a_{4}x^{q^3}$. 
Then  computers hint that each $a_i$ can be expressed as a fractional function of $s_{2}$, while its denominator has more than 50 terms.  Hence it is too complicated to go on proving the equivalence of the semifield and the field theoretically.
We are also wondering whether there are some other methods or ideas to deal with it.

Thus, we consider a concrete example that $(a,b,c)=(\omega,1,\omega^2)$, where $\omega^2+\omega+1=0$ and $m$ is even.
Through complex and tedious computation of the left nucleus of the corresponding semifield, we prove that it equals to the field $\F_{q^4}$ indeed.

(2) In experiments:

We test all the elements $(a,b,c)$ such that $P_4$'s are planar for the case $m=2$.
Unfortunately, we find that the semifields produced by the planar functions are exactly fields.
For $m=3$, we just test several $(a,b,c)$ due to the limit of the computational power, and these semifields are also equivalent to fields.
For $m>3$, it seems to be too hard and complex to verify the equivalence of the corresponding semifield and the field.

All in all, we spend much time on both theory and experiments. 
We prove for a concrete example that the corresponding semifield is field. 
The experiments also hint negative results. 
However, whether the semifields corresponding to the planar functions are fields or not is still unclear, which is left as an open problem.

In the following, we give the concrete example for a better understanding.
\begin{ex}
Let $H(x)=\omega x^{q+1}+x^{q^2+1}+\omega^2x^{q^3+1}\in\F_{q^4}[x]$, where $q=2^m$, $\omega^2+\omega+1=0$ and $m$ is even. 
Then the semifield derived from $H$ is isomorphic to the finite field.
\end{ex}
\begin{proof}
Let us define the following multiplication $*$ that 
\begin{eqnarray*}
x*y &=& xy+H(x+y)+H(x)+H(y)\nonumber\\
&=& xy+\omega(x^qy+xy^q)+(x^{q^2}y+xy^{q^2})+\omega^2(x^{q^3}y+xy^{q^3}).
\end{eqnarray*}
Since $x*1=x+\omega x^q+x^{q^2}+\omega^2x^{q^3}$, $(\F_{q^4},+,*)$ is not a semifield but a presemifield.
Denote $L(x)=x*1=x+\omega x^q+x^{q^2}+\omega^2x^{q^3}$.
According to the method introduced by B. Wu in 2014 (see \cite[Theorem 1.1]{Wu} for detail) and with the fact that $\omega^q=\omega$, we obtain  $$L^{-1}(x)=x+\omega^2x^q+x^{q^2}+\omega x^{q^3}.$$
Then we define
\begin{eqnarray*}
x\circ y &=& L^{-1}(x*y)\nonumber\\
&=& xy+x^q(\omega^2y^q+\omega y^{q^2}+y^{q^3})+x^{q^2}(\omega y^q+y^{q^2}+\omega^2y^{q^3})+x^{q^3}(y^q+\omega^2y^{q^2}+\omega y^{q^3}).
\end{eqnarray*}
Hence $(\F_{q^4},+,\circ)$ is a semifield corresponding to $H$.
Furthermore, to check whether it is new or not, we determine the left nucleus of the derived semifields.

On one hand, we have
$$\alpha\circ(x\circ y)=\alpha A_{0}(x,y)+\alpha^q A_{1}(x,y)+\alpha^{q^2} A_{2}(x,y)+\alpha^{q^3} A_{3}(x,y),$$
where
\begin{eqnarray*}
A_{0}(x,y) & = & x\circ y, \\
A_{1}(x,y) & = & \omega^2(x\circ y)^q+\omega(x\circ y)^{q^2}+(x\circ y)^{q^3}, \\
A_{2}(x,y) & = & \omega(x\circ y)^q+(x\circ y)^{q^2}+\omega^2(x\circ y)^{q^3}, \\
A_{3}(x,y) & = & (x\circ y)^q+\omega^2(x\circ y)^{q^2}+\omega(x\circ y)^{q^3}.
\end{eqnarray*}

On the other hand, we have
\begin{eqnarray*}
(\alpha\circ x)\circ y &=& (\alpha\circ x)y+(\alpha\circ x)^q(\omega^2y^q+\omega y^{q^2}+y^{q^3})+(\alpha\circ x)^{q^2}(\omega y^q+y^{q^2}+\omega^2y^{q^3})\nonumber\\
& & +~(\alpha\circ x)^{q^3}(y^q+\omega^2y^{q^2}+\omega y^{q^3})\nonumber\\
&=& \alpha B_{0}(x,y)+\alpha^q B_{1}(x,y)+\alpha^{q^2} B_{2}(x,y)+\alpha^{q^3} B_{3}(x,y),
\end{eqnarray*}
where
\begin{eqnarray*}
B_{0}(x,y) & = & xy+(x^{q^2}+\omega^2x^{q^3}+\omega x)(\omega^2y^q+\omega y^{q^2}+y^{q^3})\\
& & +~(\omega x^{q^3}+x+\omega^2 x^q)(\omega y^q+y^{q^2}+\omega^2y^{q^3}) \\
& & +~(\omega^2x+\omega x^q+x^{q^2})(y^q+\omega^2y^{q^2}+\omega y^{q^3}), \\
B_{1}(x,y) & = & (\omega^2x^q+\omega x^{q^2}+x^{q ^3})y+x^q(\omega^2y^q+\omega y^{q^2}+y^{q^3})\\
& & +~(x^{q^3}+\omega^2x+\omega x^q)(\omega y^q+y^{q^2}+\omega^2y^{q^3}) \\
& & +~(\omega x+x^q+\omega^2x^{q^2})(y^q+\omega^2y^{q^2}+\omega y^{q^3}), \\
B_{2}(x,y) & = & (\omega x^{q}+x^{q^2}+\omega^2x^{q^3})y+(\omega^2x^{q^2}+\omega x^{q^3}+x)(\omega^2y^q+\omega y^{q^2}+y^{q^3})\\
& & +~x^{q^2}(\omega y^q+y^{q^2}+\omega^2y^{q^3}) \\
& & +~(x+\omega^2x^q+\omega x^{q^2})(y^q+\omega^2y^{q^2}+\omega y^{q^3}), \\
B_{3}(x,y) & = & (x^q+\omega^2x^{q^2}+\omega x^{q^3})y+(\omega x^{q^2}+x^{q^3}+\omega^2x)(\omega^2y^q+\omega y^{q^2}+y^{q^3})\\
& & +~(\omega^2x^{q^3}+\omega x+x^q)(\omega y^q+y^{q^2}+\omega^2y^{q^3}) \\
& & +~x^{q^3}(y^q+\omega^2y^{q^2}+\omega y^{q^3}).
\end{eqnarray*}
Then a direct computation shows that
$$A_{i}(x,y)=B_{i}(x,y), i=0,1,2,3.$$
Hence $$\alpha\circ(x\circ y)=(\alpha\circ x)\circ y\textup{~~for all~} \alpha,x,y\in\F_{q^4},$$
which means that $(\F_{q^4},+,\circ)$ is isomorphic to the finite field $\F_{q^4}$.
\end{proof}


\section{Conclusion}
The discoveries based on the basic tools from algebraic geometry over finite fields and the theory of curve (hypersurface) are pretty interesting and challenging. In this paper, we investigate several types of planar functions by combining the methods of L. Qu \cite{Q} and the Lang-Weil bound. 
As far as we know, it is the first time to combine these two methods to study planar functions. There are mainly two achievements. For one thing, we completely provide the asymptotic conditions for four classes of polynomials over $\F_{q^2}$, $\F_{q^3}$ and $\F_{q^4}$ to be planar (i.e., Theorem \ref{main}). To our knowledge, before this paper, only one class of polynomials with necessary and sufficient conditions to be planar was characterized by D. Bartoli and M. Timpanella \cite{Bartoli}.
Furthermore, we generalize their results greatly.
For the other thing, by using the Lang-Weil bound \cite{ACGM,SLAW} and the approach of L. Qu \cite{Q}, our proof seems to be much simpler compared to that of D. Bartoli and M. Timpanella \cite{Bartoli}, and the reason is that the curve $\mathcal{C}$ what we investigate becomes much simpler.
Some of our results are exactly generalizations of known ones.
We hope that these results will supply more choices and methods to study planar functions, projective planes and other applications in mathematics.



\begin{thebibliography}{99}


\bibitem{Abdukhalikov}
K. Abdukhalikov. Symplectic spreads, planar functions, and mutually unbiased bases. Journal of Algebraic Combinatorics, vol. 41, no. 4, pp. 1055-1077, 2015.

\bibitem{Albert}
A. A. Albert. Finite division algebras and finite planes. In Combinatorial Analysis, Proceedings of the 10th Symposium in Applied Mathematics, volume 10 of Symposia in Applied Mathematics, American Mathematical Society, Providence, RI, pp. 53-70, 1960.

\bibitem{Bartoli-K.-U}
D. Bartoli and K.-U. Schmidt. Low-degree planar polynomials over finite fields of characteristic two. Journal of Algebra, 535, pp. 541-555, 2019. 

\bibitem{Bartoli}
D. Bartoli and M. Timpanella. A family of planar binomials in characteristic 2. Finite Fields and their Applications. Online, https://doi.org/10.1016/j.ffa.2020.101651.

\bibitem{ACGM}
A. Cafure and G. Matera. Improved explicit estimates on the number of solutions of equations over a finite field. Finite Fields and their Applications, 12, pp. 155-185, 2006.

\bibitem{CDJ}
C. Carlet, C. Ding and J. Yuan. Linear codes from perfect nonlinear mappings and their secret sharing schemes.  IEEE Transactions on Information Theory, 51 (6), pp. 2089-2102, Jun. 2005.

\bibitem{Coulter-Henderson}
R. S. Coulter and M. Henderson. Commutative presemifields and semifields. Advances in Mathematics, 217 (1), pp. 282-304, 2008.

\bibitem{Dembowski}
P. Dembowski. Finite Geometries. Berlin, Germany: Springer, 1968.

\bibitem{D-O}
P. Dembowski and T.G. Ostrom. Planes of order $n$ with collineation groups of order $n^2$. Mathematische Zeitschrift, 103 (3), pp. 239-258, 1968.

\bibitem{Ding-J}
C. Ding and J. Yin. Signal sets from functions with optimum nonlinearity. IEEE Transactions on Communications, 55 (5), pp. 936-940, May 2007.

\bibitem{MGES}
M. J. Ganley and E. Spence. Relative difference sets and quasiregular collineation groups. Journal of Combinatorial Theory, Series A, 19 (2), pp. 134-153, 1975.	

\bibitem{HXD}	
X. Hou. Lectures on Finite Fields. Providence, Rhode Island: American Mathematical Society, 2018. Series: Graduate studies in mathematics, volume 190.

\bibitem{HLZFG}	
S. Hu, S. Li, T. Zhang, T. Feng and G. Ge. New pseudo-planar binomials in characteristic two and related schemes. Designs, Codes and Cryptography, 76 (2), pp. 345-360, 2015.	

\bibitem{Kantor}
W. M. Kantor. Commutative semifields and symplectic spreads. Journal of Algebra, 270 (1), pp. 96-114, 2003.

\bibitem{Kantor-Williams}
W. M. Kantor and M. E. Williams. Symplectic semifield planes and $\mathbb{Z}_{4}$-linear codes. Transactions of the American Mathematical Society, 356 (3), pp. 895-938, 2004.

\bibitem{Knuth}
D. E. Knuth. A class of projective planes. Transactions of the American Mathematical Society, 115, pp. 541-549, 1965.


\bibitem{SLAW}
S. Lang and A. Weil. Number of points of varieties in finite fields. American Journal of Mathematics, 76 (4), pp. 819-827, 1954.

\bibitem{Lavrauw-Polverino}
M. Lavrauw and O. Polverino. Finite semifields and Galois geometry. In Current Research Topics in Galois Geometry, L. Storme and J. De Beule (editors), NOVA Academic Publishers, Hauppauge, NY, USA, pp. 131-160, 2011.

\bibitem{Lidl}	
R. Lidl and H. Niederreiter. Finite Fields. Encyclopedia of Mathematics and its Applications, Cambridge, U.K.: Cambridge University. Press, vol. 20. 1997.

\bibitem{Menichetti}
G. Menichetti. On a Kaplansky conjecture concerning three-dimensional division algebras over a finite field. Journal of Algebra, 47 (2), pp. 400-410, 1977.

\bibitem{PMMEZ}
P. M$\ddot{\textup{u}}$ller and M.E. Zieve. Low-degree planar monomials in characteristic two. Journal of Algebraic Combinatorics, 42 (3), pp. 695-699, 2015.

\bibitem{Nyberg}
K. Nyberg and L.R. Knudsen. Provable security against differential cryptanalysis. In: Advances in Cryptology-CRYPTO'92, Santa Barbara, CA, 1992. Lecture Notes in Computer Science, vol. 740, pp. 566-574. Springer, Berlin (1992).

\bibitem{Q}
L. Qu. A new approach to constructing quadratic pseudo-planar functions over $\F_{2^n}$. IEEE Transactions on Information Theory, 62 (11), pp. 6644-6658, 2016.

\bibitem{Scherr}
Z. Scherr and M. E. Zieve. Some planar monomials in characteristic 2. Annals of Combinatorics, vol 18, pp. 723-729, 2014.

\bibitem{Schmidt}
K.-U. Schmidt and Y. Zhou. Planar functions over fields of characteristic two. Journal of Algebraic Combinatorics, 40 (2), pp. 503-526, 2014.

\bibitem{Stichtenoth}
H. Stichtenoth. Algebraic function fields and codes. Volume 254 of Graduate Texts in Mathematics, 2nd edn. Springer, Berlin, 2009.

\bibitem{Wu}
B. Wu. The compositional inverse of a class of linearized permutation polynomials over $\F_{2^n}$, $n$ odd. Finite Fields and their Applications, 29, pp. 34-48, 2014.

\bibitem{Zhou}
Y. Zhou. $(2^n , 2^n , 2^n, 1) $-relative difference sets and their representations. Journal of Combinatorial Designs, 21 (12), pp. 563-584, 2013.

\bibitem{ZDL}
Z. Zhou, C. Ding and N. Li. New families of codebooks achieving the Levenshtein bounds. IEEE Transactions on Information Theory, 60 (11), pp. 7382-7387, 2014.

\end{thebibliography}
\end{document}